\documentclass[twoside,leqno,10pt, A4]{amsart}
\usepackage{amsfonts}
\usepackage{amsmath}
\usepackage{amscd}
\usepackage{amssymb}
\usepackage{amsthm}
\usepackage{amsrefs}
\usepackage{latexsym}
\usepackage{mathrsfs}
\usepackage{bbm}
\usepackage{amscd}
\usepackage{amssymb}
\usepackage{amsthm}
\usepackage{amsrefs}
\usepackage{latexsym}
\usepackage{mathrsfs}
\usepackage{bbm}
\usepackage{enumerate}
\usepackage{graphicx}
\allowdisplaybreaks
\usepackage{color}
\begin{document}

\newtheorem{theorem}[subsection]{Theorem}
\newtheorem{proposition}[subsection]{Proposition}
\newtheorem{lemma}[subsection]{Lemma}
\newtheorem{corollary}[subsection]{Corollary}
\newtheorem{conjecture}[subsection]{Conjecture}
\newtheorem{prop}[subsection]{Proposition}
\newtheorem{defin}[subsection]{Definition}

\numberwithin{equation}{section}
\newcommand{\mr}{\ensuremath{\mathbb R}}
\newcommand{\mc}{\ensuremath{\mathbb C}}
\newcommand{\N}{\mathbb{N}}
\newcommand{\dif}{\mathrm{d}}
\newcommand{\intz}{\mathbb{Z}}
\newcommand{\ratq}{\mathbb{Q}}
\newcommand{\natn}{\mathbb{N}}
\newcommand{\comc}{\mathbb{C}}
\newcommand{\rear}{\mathbb{R}}
\newcommand{\prip}{\mathbb{P}}
\newcommand{\uph}{\mathbb{H}}
\newcommand{\fief}{\mathbb{F}}
\newcommand{\majorarc}{\mathfrak{M}}
\newcommand{\minorarc}{\mathfrak{m}}
\newcommand{\sings}{\mathfrak{S}}
\newcommand{\fA}{\ensuremath{\mathfrak A}}
\newcommand{\mn}{\ensuremath{\mathbb N}}
\newcommand{\mq}{\ensuremath{\mathbb Q}}
\newcommand{\half}{\tfrac{1}{2}}
\newcommand{\f}{f\times \chi}
\newcommand{\summ}{\mathop{{\sum}^{\star}}}
\newcommand{\chiq}{\chi \bmod q}
\newcommand{\chidb}{\chi \bmod db}
\newcommand{\chid}{\chi \bmod d}
\newcommand{\sym}{\text{sym}^2}
\newcommand{\hhalf}{\tfrac{1}{2}}
\newcommand{\sumstar}{\sideset{}{^*}\sum}
\newcommand{\sumprime}{\sideset{}{'}\sum}
\newcommand{\sumprimeprime}{\sideset{}{''}\sum}
\newcommand{\sumflat}{\sideset{}{^\flat}\sum}
\newcommand{\shortmod}{\ensuremath{\negthickspace \negthickspace \negthickspace \pmod}}
\newcommand{\V}{V\left(\frac{nm}{q^2}\right)}
\newcommand{\sumi}{\mathop{{\sum}^{\dagger}}}
\newcommand{\mz}{\ensuremath{\mathbb Z}}
\newcommand{\leg}[2]{\left(\frac{#1}{#2}\right)}
\newcommand{\muK}{\mu_{\omega}}
\newcommand{\thalf}{\tfrac12}
\newcommand{\lp}{\left(}
\newcommand{\rp}{\right)}
\newcommand{\Lam}{\Lambda_{[i]}}
\newcommand{\lam}{\lambda}
\newcommand{\af}{\mathfrak{a}}
\newcommand{\sw}{S_{[i]}(X,Y;\Phi,\Psi)}
\newcommand{\lz}{\left(}
\newcommand{\pz}{\right)}
\newcommand{\bfrac}[2]{\lz\frac{#1}{#2}\pz}
\newcommand{\odd}{\mathrm{\ primary}}
\newcommand{\even}{\text{ even}}
\newcommand{\res}{\mathrm{Res}}
\newcommand{\sumn}{\sumstar_{(c,1+i)=1}  w\left( \frac {N(c)}X \right)}
\newcommand{\lab}{\left|}
\newcommand{\rab}{\right|}
\newcommand{\Go}{\Gamma_{o}}
\newcommand{\Ge}{\Gamma_{e}}
\newcommand{\M}{\widehat}
\def\su#1{\sum_{\substack{#1}}}
\newcommand{\Echar}{\mathbb{E}^{\text{char}}}
\newcommand{\E}{\mathbb{E}}
\newcommand{\p}{\mathbb{P}}

\theoremstyle{plain}
\newtheorem{conj}{Conjecture}
\newtheorem{remark}[subsection]{Remark}

\newcommand{\pfrac}[2]{\left(\frac{#1}{#2}\right)}
\newcommand{\pmfrac}[2]{\left(\mfrac{#1}{#2}\right)}
\newcommand{\ptfrac}[2]{\left(\tfrac{#1}{#2}\right)}
\newcommand{\pMatrix}[4]{\left(\begin{matrix}#1 & #2 \\ #3 & #4\end{matrix}\right)}
\newcommand{\ppMatrix}[4]{\left(\!\pMatrix{#1}{#2}{#3}{#4}\!\right)}
\renewcommand{\pmatrix}[4]{\left(\begin{smallmatrix}#1 & #2 \\ #3 & #4\end{smallmatrix}\right)}
\def\en{{\mathbf{\,e}}_n}

\newcommand{\ppmod}[1]{\hspace{-0.15cm}\pmod{#1}}
\newcommand{\ccom}[1]{{\color{red}{Chantal: #1}} }
\newcommand{\acom}[1]{{\color{blue}{Alia: #1}} }
\newcommand{\alexcom}[1]{{\color{green}{Alex: #1}} }
\newcommand{\hcom}[1]{{\color{brown}{Hua: #1}} }

\makeatletter
\def\widebreve{\mathpalette\wide@breve}
\def\wide@breve#1#2{\sbox\z@{$#1#2$}%
     \mathop{\vbox{\m@th\ialign{##\crcr
\kern0.08em\brevefill#1{0.8\wd\z@}\crcr\noalign{\nointerlineskip}%
                    $\hss#1#2\hss$\crcr}}}\limits}
\def\brevefill#1#2{$\m@th\sbox\tw@{$#1($}%
  \hss\resizebox{#2}{\wd\tw@}{\rotatebox[origin=c]{90}{\upshape(}}\hss$}
\makeatletter

\title[On the Gauss circle problem over smooth numbers]{On the Gauss circle problem over smooth numbers}

\author[P. Gao]{Peng Gao}
\address{School of Mathematical Sciences, Beihang University, Beijing 100191, China}
\email{penggao@buaa.edu.cn}

\begin{abstract}
   The Gauss circle problem concerns with the evaluation of $\sum_{n \leq x}r(n)$, where $r(n)$ denotes the number of representations of $n$ as sums of two squares and $x \geq 2$. Let $\Psi_G(x,y)$ denote the sum of $y$-smooth numbers below $x$ weighted by $r(n)$. In this paper, we evaluate $\Psi_G(x,y)$  asymptotically for certain ranges of $x \geq y \geq 2$.
\end{abstract}

\maketitle

\noindent {\bf Mathematics Subject Classification (2010)}: 11M06, 11N37   \newline

\noindent {\bf Keywords}:  Gauss circle problem, smooth numbers, mean value

\section{Introduction}\label{sec1}

 Let $r(n)$ denote the number of representations of $n$ as sums of two squares of integers, where different signs and different orders of the summands are counted as different. The well-known Gauss circle problem can be viewed as the evaluation of the sum $\sum_{n \leq x}r(n)$ for real $x \geq 2$.

  It is shown by Gauss that (see \cite[(1.70)]{iwakow}) for $\theta=1/2$, we have
\begin{align}
\label{rnaverage}
  \sum_{n \leq x}r(n)=\pi x+O(x^{\theta}).
\end{align}  
   The best estimate for the error term in \eqref{rnaverage} up to date is due to M. N. Huxley, who proved in \cite{Huxley03} that the above expression holds with $\theta=131/208+\varepsilon$ for any $\varepsilon>0$. We remark here (see \cite[p. 215]{iwakow}) that it is conjectured that the best possible error term in \eqref{rnaverage} should be $\theta=1/4+\varepsilon$ for any $\varepsilon>0$. 

  We denote $P(n)$ the largest prime factor of any positive integer $n$. We say an integer $n \geq 1$ is $y$-smooth or $y$-friable if $P(n) \leq y$. 
  In \cite{Goswami24}, A. Goswami studied the Gauss circle problem over smooth integers. To be more precise, we define for $x \geq y \geq 2$, 
\begin{align}
\label{Psif}
 \Psi_G(x,y):=\sum_{\substack{n \leq x \\ P(n) \leq y}}r(n). 
\end{align}
   Let $\rho(u)$, known as the Dickman function to be the unique continuous function on $[0, \infty)$ satisfying the differential difference equation
\begin{align*}
 u\rho'(u)+\rho(u-1)=0,
\end{align*}
  subject to the initial condition $\rho(u)=1, 0 \leq u \leq 1$. 
  
  We further set $x=y^u$ for any $x \geq y \geq 2$ throughout the paper so that $u \geq 1$. Then it is shown in \cite[Theorem 2.2]{Goswami24} that for any $\varepsilon>0$ and $\exp((\log x)^{2/3+\varepsilon}) \leq y \leq x$, there exists a constant $c>0$ such that uniformly in $u$ and $y$, 
\begin{align}
\label{psiGasymp}
 \Psi_G(x,y)=\pi\rho(u)x+O\Big(\frac {x\rho(u)\log^2(u+1)}{\log y}+xu^2\log u  \log y \exp(-c\sqrt{\log y})\Big ).
\end{align}  
  
   It is the aim of this paper to evaluate $\Psi_G(x,y)$ asymptotically for a wider range of $u$. Our approach here is inspired by the treatments employed by
A. Hildebrand and G. Tenenbaum \cite{HT86} to obtain an asymptotic formula for  $\Psi(x,y)$, where $\Psi(x,y)=\sum_{\substack{n \leq x \\ P(n) \leq y}}1$ denotes the number of integers $\leq x$ that are $y$-smooth. Their approach starts by observing that one may apply Rankin's trick to see that for any $\sigma > 0$, $x \geq y \geq 2$, 
\begin{align*}
\Psi(x,y) = \sum_{\substack{n \leq x \\ P(n) \leq y}} 1 \leq \sum_{P(n) \leq y} \biggl( \frac{x}{n} \biggr)^{\sigma} =  x^{\sigma} \zeta(\sigma, y), 
\end{align*}
   where for complex $s=\sigma+it$ with $\sigma, t \in \mr$, we set 
\begin{align*}
\zeta(s, y)=\prod_{p \leq y}(1-p^{-s})^{-1}.
\end{align*}
  Here and throughout the paper, we reserve the letter $p$ for a prime number.
 
  We write $\varphi(s,y)=\log \zeta(s, y)$ and we denote $\varphi_k(s,y)$ the $k$-th partial derivative of $\varphi(s,y)$ with respect to $s$ for all integers $k \geq 1$. For $x \geq y \geq 2$, let $\alpha=\alpha(x,y)$ denote the unique solution of the equation $\varphi_1(\alpha, y)+\log x=0$, then it is shown in \cite{HT86} that $x^{\sigma} \zeta(\sigma, y)$ is minimized at this $\alpha$. The key idea in \cite{HT86} is to evaluate $\Psi(x,y)$ using Perron's formula by choosing the integral involved (say with respect to the variable $s$) to be on the line $\Re(s)=\alpha$. This way, it was shown in \cite[Theorem 1]{HT86} that uniformly for $x \geq y \geq 2$,
\begin{align*}
\Psi(x, y)=\frac {x^{\alpha}\zeta(\alpha,y)}{\alpha\sqrt{2\pi \varphi_2(\alpha,y)}}\Big (1+ O(\frac 1u+\frac {\log y}{y}) \Big).
\end{align*}
 Moreover, asymptotical behaviors of both $\alpha(x,y)$ and $\varphi_2(\alpha,y)$ were given in \cite[Theorem 2]{HT86}.

  In our case, we denote for $s \in \mc$, 
\begin{align*}
  H(s,G;y) :=\sum_{\substack{n \\ P(n) \leq y}}\frac {r(n)/4}{n^s}. 
\end{align*}
   Here we note that we consider $r(n)/4$ instead of $r(n)$ is because that the function $r(n)$ is multiplicative (see the remark made below \eqref{zetaKdecomp}). We  set $\phi(s,G;y) :=\log H(s,G;y)$, and we denote $\phi_k(s,G;y)$ the $k$-th partial derivative of $\phi(s,G;y)$ with respect to $s$ for all integers $k \geq 1$. 
  
  We apply Rankin's trick to see that for any $\sigma > 0$, $x \geq y \geq 2$, 
\begin{equation*}
\Psi_G(x,y) = \sum_{\substack{n \leq x \\ P(n) \leq y}} r(n) \leq \sum_{P(n) \leq y} r(n)\biggl( \frac{x}{n} \biggr)^{\sigma} =  4x^{\sigma} H(\sigma,G; y) .
\end{equation*}
  It follows that 
\begin{align}
\label{Psibound}
  \Psi_G(x,y) \leq 4\min_{\sigma>0}x^{\sigma} H(\sigma,G; y).
\end{align}
  It is shown in Lemma \ref{lemalphafunique} below that the infimum above is in fact a minimum, and is attained for a unique $\alpha_G=\alpha_G(x,y)>0$ defined as
the solution of the equation 
\begin{align}
\label{alphafdef}
  \log x+\phi_1(\alpha_G, G; y)=0.
\end{align}
  We shall henceforth call $\alpha_G$ the saddle point. 
  
Our main result evaluates $\Psi_G(x, y)$ in terms of $H(\alpha_G,G;y)$. 
\begin{theorem}
\label{thm: psiasymp}
 With the notation as above. Let $x \geq y \geq 2$ and set $x=y^u$. For any fixed $\varepsilon_0>0$, we have for $(\log \log y)^2 \leq u \leq y^{1/(2+\varepsilon_0)}/\log y$ and $y$ large enough,
\begin{align}
\label{psiasymp}
\Psi_G(x, y)=\frac {4x^{\alpha_G}H(\alpha_G,G;y)}{\alpha_G\sqrt{2\pi \phi_2(\alpha_G,G;y)}}\Big (1+ O(\frac 1u) \Big ).
\end{align}
\end{theorem}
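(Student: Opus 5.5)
We follow the Hildebrand–Tenenbaum saddle-point method of \cite{HT86}, adapted to the Dirichlet series $H(s,G;y)$. The starting point is the factorization $r(n)/4=\sum_{d\mid n}\chi_{-4}(d)$, which gives
\begin{align*}
H(s,G;y)=\zeta(s,y)L(s,\chi_{-4};y),\qquad L(s,\chi_{-4};y):=\prod_{p\le y}(1-\chi_{-4}(p)p^{-s})^{-1}.
\end{align*}
Hence $\phi(s,G;y)=\varphi(s,y)+\log L(s,\chi_{-4};y)$, and every $\phi_k$ is $\varphi_k$ plus a twisted prime sum $\sum_{p\le y}\chi_{-4}(p)(\log p)^k p^{-s}/(1-\chi_{-4}(p)p^{-s})$ (up to sign).

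\textbf{Step 1: size of the saddle point and of $\phi_2$.} Using the prime number theorem in the progressions mod $4$, the twisted sums for $k=1,2$ are $O(y^{1-\sigma}\exp(-c\sqrt{\log y}))$ plus $O(1)$, whereas the untwisted ones are of size $y^{1-\sigma}/(1-\sigma)$. Thus
\begin{align*}
\phi_1(s,G;y)=2\varphi_1(s,y)+\text{(smaller)}.
\end{align*}
This shows that $\alpha_G(x,y)$ behaves like $\alpha(x^{1/2},y)$, so $\alpha_G=1-\xi(u)/\log y+\cdots$ in the stated range, and $\phi_2(\alpha_G,G;y)\asymp u\log^2 y$. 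The lower bound $u\ge(\log\log y)^2$ ensures that the $1-\sigma$ scale dominates the error terms. The upper bound $u\le y^{1/(2+\varepsilon_0)}/\log y$ keeps $\alpha_G\ge 1/2+\varepsilon_0'$, up to logarithmic factors, so that the Dirichlet series stays in a region where the twisted sums are controlled and $p^{-\alpha_G}$ is small.

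\textbf{Step 2: Perron's formula on $\Re s=\alpha_G$.} We use a smoothed or truncated Perron formula at height $T$:
\begin{align*}
\Psi_G(x,y)=\frac{4}{2\pi i}\int_{\alpha_G-iT}^{\alpha_G+iT}H(s,G;y)\frac{x^s}{s}\,ds+\text{error}.
\end{align*}
The error is handled as in \cite{HT86}, bounding the short-interval contribution near $x$ by the Rankin bound \eqref{Psibound}; for this the truncation is taken with $T$ a power of $y$.

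\textbf{Step 3: the central range.} On $|t|\le 1/\log y$, we Taylor-expand
\begin{align*}
\phi(\alpha_G+it,G;y)=\phi(\alpha_G)-\tfrac12\phi_2 t^2-\tfrac{i}{6}\phi_3t^3+O(\phi_4t^4),
\end{align*}
where the linear term cancels with $x^{it}$ by \eqref{alphafdef}. Using $\phi_k\ll u\log^k y$, integrating the Gaussian yields the main term $4x^{\alpha_G}H(\alpha_G,G;y)/(\alpha_G\sqrt{2\pi\phi_2})$. The corrections are of relative size $\phi_4/\phi_2^2+\phi_3^2/\phi_2^3\ll 1/u$, and $1/s$ is expanded around $\alpha_G$.

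\textbf{Step 4: the outer range (main obstacle).} For $1/\log y\le|t|\le T$ we need
\begin{align*}
|H(\alpha_G+it,G;y)/H(\alpha_G,G;y)|\le \exp(-c\,u\,t^2\log^2 y/(1+t^2\log^2 y))
\end{align*}
or a comparable saving. We write
\begin{align*}
\log|H(\alpha_G+it)/H(\alpha_G)|=-\sum_{p\le y}p^{-\alpha_G}\bigl[(1-\cos(t\log p))+(\chi_{-4}(p)-\ldots)\bigr]
\end{align*}
at leading order. The $\zeta$ part gives the saving of \cite[Lemma 8]{HT86}. The danger is that the $\chi_{-4}$ part could cancel it, for instance through a "conspiracy" where $p^{it}\approx\chi_{-4}(p)$. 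This is excluded by the zero-free region and the prime number theorem for $L(s,\chi_{-4})$, which give a lower bound for $\sum_{p\le y}p^{-\alpha_G}(1-\Re\,\chi(p)p^{-it})$ for $\chi\in\{1,\chi_{-4}\}$. For large $|t|$ (up to $T$), we would use Vinogradov–Korobov-type bounds in the mod-$4$ progressions, and this is where the conditions on $u$ enter. The saving makes the outer range contribute $O(1/u)$ relative to the main term, which completes the proof.
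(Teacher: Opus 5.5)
Step~2 has a genuine gap. The rest of your architecture is the paper's: Perron on $\Re s=\alpha_G$, a Gaussian evaluation for $|t|\le 1/\log y$, and decay of $|H(\alpha_G+it,G;y)/H(\alpha_G,G;y)|$ from the prime number theorem for $\zeta$ and $L(s,\chi_4)$ in the Vinogradov--Korobov region. The truncated Perron formula leaves the term $x^{\alpha_G}\sum_{P(n)\le y,\,|\log(x/n)|\le T^{-1/2}}r(n)n^{-\alpha_G}$. This is essentially $\Psi_G$ over the short interval $[xe^{-T^{-1/2}},xe^{T^{-1/2}}]$. The Rankin bound \eqref{Psibound} gives only $O(x^{\alpha_G}H(\alpha_G,G;y))$ for it. That exceeds the main term $4x^{\alpha_G}H(\alpha_G,G;y)/(\alpha_G\sqrt{2\pi\phi_2(\alpha_G,G;y)})$ by a factor $\asymp\alpha_G\sqrt{\phi_2}\asymp u^{1/2}\log y$ (Lemma~\ref{lemphibound}). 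Taking $T$ a power of $y$, or any larger $T$, does not help, because Rankin's bound cannot see short intervals at all. Counting lattice points in the annulus instead would, for large $u$, force $\log T\gg u\log u$, far beyond the range $|t|\le Y(\lambda)$ where you control $H$.

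\medskip

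What is missing is the analogue of \cite[Lemma 9]{HT86}, the paper's Lemma~\ref{lemphidiffbound}: for $1\le z\le Y(\lambda)$,
\[
\Psi_G(x+x/z,y)-\Psi_G(x,y)\ll x^{\alpha_G}H(\alpha_G,G;y)(1/z+e^{-d_0u}).
\]
To prove it, bound the indicator of $(x,x+x/z]$ by $O\big((x/n)^{\alpha_G}\exp(-\tfrac12(z\log(x/n))^2)\big)$. The resulting sum equals $\frac{4}{z\sqrt{2\pi}}\int_{\mathbb{R}}H(\alpha_G+it,G;y)\,x^{\alpha_G+it}e^{-t^2/(2z^2)}\,dt$. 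Use \eqref{Hbound} for $|t|\le 1$ and Lemma~\ref{RS3}(ii) for $|t|\ge 1$. So your Step~4 decay estimate is needed a second time, here up to $|t|\approx z\approx\sqrt T$.

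The resulting relative error $\alpha_G\sqrt{\phi_2}\,(T^{-1/2}+e^{-d_0u})$ combines with $\alpha_G\sqrt{\phi_2}\,(\log T)\exp(-b_7u(\log 2u)^{-2})$ from the outer Perron range; cf.\ \eqref{sumhlambda5}. Making these two terms $O(1/u)$ is where $u\ge(\log\log y)^2$ is actually used. It is not needed for the saddle-point asymptotics, which hold for all $1\le u\le y/(8\log y)$.

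The factor $(\log 2u)^{-2}$ in that exponent is unavoidable, and your Step~4 target $\exp(-cut^2\log^2y/(1+t^2\log^2y))$ is false. Since $1-\alpha_G\asymp\log u/\log y$, at $|t|=2\pi/\log y$ one has only $\sum_{p\le y}p^{-\alpha_G}(1-\cos(t\log p))\asymp u/(\log u)^2$. The correct bound is \eqref{Lrationtlarge}.

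\medskip

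Two smaller slips. First, because $1+\chi_4(p)\ge 0$,
\[
\log|H(s,G;y)/H(\alpha_G,G;y)|=-\sum_{p\le y}(1+\chi_4(p))p^{-\alpha_G}(1-\cos(t\log p))+O(1),
\]
so there is no sign cancellation between the $\zeta$ and $\chi_4$ parts. One only needs this sum to have the same main term as in \cite[Lemma 8]{HT86}, which is exactly \eqref{lambdadiffest}. Second, the twisted sum being small gives $\phi_1=\varphi_1+(\text{smaller})$, not $2\varphi_1$. Hence $\alpha_G$ behaves like $\alpha(x,y)$, not $\alpha(x^{1/2},y)$; this is harmless for the orders of magnitude you actually use.
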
 
   
  The proof of Theorem \ref{thm: psiasymp} is similar to that of the proof of \cite[Theorem 1]{HT86}. We also apply Perron's formula to evaluates $\Psi_G(x, y)$, and again with the integral involved to be on the line determined by the saddle point. 

  For $u>1$, we define $\xi=\xi(u)$ to be the non-zero solution of the equation
\begin{align*}
\begin{split}
 e^{\xi}=1+u\xi.
\end{split}
\end{align*}   
   We also set $\xi(1)=0$. 

  Our next result evaluates the main term on the right-hand side of \eqref{psiasymp} asymptotically in terms of simple functions of $x, y$.  
\begin{theorem}
\label{thm: alphaasymp}
 With the notation as above. Let $x \geq y \geq 2$ and set $x=y^u$. For any fixed $\varepsilon_0>0$, we have uniformly for $(\log x)^{2+\varepsilon_0} <y<x$,
\begin{align}
\label{alphaasymp}
\begin{split}
\alpha_G(x, y)=& \frac {\log (1+y/\log x)}{\log y}\Big (1+ O\big(\frac {\log \log (1+y)}{\log y}\big) \Big ), \\
\phi_2(x,G; y)=&(1+\frac {\log x}{y})\log x \cdot \log y \Big (1+ O\big(\frac 1{\log (1+u)}+\frac 1{\log y}\big) \Big ), \\
\frac {4x^{\alpha_G}H(\alpha_G,G;y)}{\alpha_G\sqrt{2\pi \phi_2(\alpha_G,G; y)}} 
=& \pi x\Big (\frac {\xi'(u)}{2\pi}\Big )^{1/2}\exp \Big (-\gamma_0 -u\xi(u)+\int^{\xi(u)}_0\frac {e^s-1}{s}ds+ O_{\varepsilon}\big(\frac {\log (1+u)}{\log y}+u\exp(-(\log y)^{3/5-\varepsilon})\big) \Big ),
\end{split}
\end{align}
 where $\gamma_0=0.577\ldots$ is the Euler constant. 
\end{theorem}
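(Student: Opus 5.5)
The plan is to reduce everything to the corresponding statements for $\zeta(s,y)$ in \cite{HT86}, using the factorization of the Dedekind zeta function of $\mq(i)$. Since $r(n)/4=\sum_{d\mid n}\chi_{-4}(d)$ is multiplicative, we have
\begin{align*}
H(s,G;y)=\zeta(s,y)L(s,\chi_{-4};y), \qquad L(s,\chi_{-4};y):=\prod_{p\le y}(1-\chi_{-4}(p)p^{-s})^{-1}.
\end{align*}
Hence $\phi_k(s,G;y)=\varphi_k(s,y)+\psi_k(s,y)$, where $\psi(s,y)=\log L(s,\chi_{-4};y)$. The key input is that, for $\sigma$ in the relevant range $\sigma\ge 1-c$ with $1-\sigma\ll (\log\log y)/\log y$, the quantities $\psi_k(\sigma,y)$ are of lower order. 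Indeed,
\begin{align*}
\psi_1(\sigma,y)=-\sum_{p\le y}\frac{\chi_{-4}(p)\log p}{p^{\sigma}}+O(1).
\end{align*}
By partial summation with the prime number theorem in arithmetic progressions (error $x\exp(-c(\log x)^{3/5-\varepsilon})$, Vinogradov--Korobov), this is bounded by $O\big(1+ y^{1-\sigma}\exp(-(\log y)^{3/5-\varepsilon})\big)$. Similar bounds hold for $\psi_2$ with an extra factor $\log y$. By contrast, $\varphi_1(\sigma,y)\asymp -y^{1-\sigma}/(1-\sigma)$ and $\varphi_2\asymp \log x\log y$. The lower bound $y>(\log x)^{2+\varepsilon_0}$ guarantees that $\alpha$ stays bounded away from $0$ (indeed $\alpha\ge 1/2+\delta$), so the Dirichlet series for $\psi$ converge absolutely in the relevant range.

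Step 1 (saddle point). Write $\alpha_G=\alpha+\eta$, where $\alpha=\alpha(x,y)$ is the Hildebrand--Tenenbaum saddle point. From $\varphi_1(\alpha_G)+\psi_1(\alpha_G)=-\log x=\varphi_1(\alpha)$ and the mean value theorem, $\eta\varphi_2(\tilde\sigma)=-\psi_1(\alpha_G)$. This gives
\begin{align*}
\eta\ll \frac{1+u\exp(-(\log y)^{3/5-\varepsilon})}{\log x\log y},
\end{align*}
which is far smaller than the error allowed in the first line of \eqref{alphaasymp}. That line then follows from \cite[Theorem 2]{HT86}, whose estimate for $\alpha$ is exactly the stated one. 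Since $\phi_2=\varphi_2+\psi_2$ and $\varphi_2$ varies slowly on a scale $1/\log y$, the second line follows from the HT86 asymptotic for $\varphi_2(\alpha,y)$ (the paper's $\phi_2(x,G;y)$ presumably means $\phi_2(\alpha_G,G;y)$).

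Step 2 (main term). Write
\begin{align*}
\log\big(x^{\alpha_G}H(\alpha_G,G;y)\big)=\alpha_G\log x+\varphi(\alpha_G,y)+\psi(\alpha_G,y).
\end{align*}
Since $\sigma\mapsto \sigma\log x+\varphi(\sigma,y)$ is stationary at $\alpha$, replacing $\alpha_G$ by $\alpha$ costs $O(\eta^2\varphi_2+\eta|\psi_1|)$, which is negligible.

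It remains to evaluate $\psi(\alpha,y)$ and $\varphi(\alpha,y)+\alpha\log x$.
\begin{enumerate}
\item For $\psi(\alpha,y)$, we show it equals $\log L(1,\chi_{-4})+O\big(\frac{\log(1+u)}{\log y}+u\exp(-(\log y)^{3/5-\varepsilon})\big)$. Here $\log L(1,\chi_{-4})=\log(\pi/4)$. To do this, compare $\sum_{p\le y}\chi_{-4}(p)p^{-\alpha}$ with $\sum_p\chi_{-4}(p)p^{-1}$. Split into $p\le y$ versus $p>y$, and handle $p^{-\alpha}$ versus $p^{-1}$ by partial summation with the PNT in progressions, using $(1-\alpha)\log y\asymp\log(1+u)$.
\item For $\varphi(\alpha,y)+\alpha\log x$, together with the factor $\alpha\sqrt{2\pi\varphi_2}$, the HT86 analysis (their Theorem 2 and the formula relating to $\rho(u)$ via $\xi(u)$) gives
\begin{align*}
x\big(\tfrac{\xi'(u)}{2\pi}\big)^{1/2}\exp\Big(\gamma_0-u\xi+\int_0^\xi\frac{e^s-1}{s}\,ds\Big)\cdot\frac{1}{\log y\,\zeta\text{-normalisation}}.
\end{align*}
More concretely, I would redo their computation, $\varphi(\alpha,y)=\log\zeta(\alpha)+\int$ terms via the PNT, writing $\zeta(\alpha)\approx 1/(\alpha-1)$ for $\alpha$ near $1$. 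The extra factor $1/\alpha$ and the $\sqrt{\phi_2}$ versus $\sqrt{\varphi_2}$ replacement are handled by Step 1.
\end{enumerate}
The combination of the constants $4\cdot\frac{\pi}{4}=\pi$ and the Euler-constant term produces $\pi x(\xi'/2\pi)^{1/2}\exp(-\gamma_0-\dots)$, once the sign conventions for $\gamma_0$ in the identity $\zeta(s)=\frac{1}{s-1}+\gamma_0+\dots$ and the Mertens-type evaluation of $\sum_{p\le y}p^{-\alpha}$ are tracked.

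The main obstacle is Step 2: obtaining the precise expansion of $\varphi(\alpha,y)$ with error $\frac{\log(1+u)}{\log y}+u\exp(-(\log y)^{3/5-\varepsilon})$. This requires writing
\begin{align*}
\sum_{p\le y}p^{-\alpha}=\int_2^y t^{-\alpha}\frac{dt}{\log t}+\text{(PNT error)}
\end{align*}
and transforming the integral, via $t=y^{v}$ and $(1-\alpha)\log y\approx\xi(u)$, into $\log\frac{1}{(1-\alpha)\log y}+\int_0^{\xi}\frac{e^s-1}{s}\,ds-\gamma_0$-type expressions. The difference between $(1-\alpha)\log y$ and $\xi(u)$ must be controlled to second order, using the defining equation $\varphi_1(\alpha)=-\log x$; this is where the $\log(1+u)/\log y$ loss arises. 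I would follow the corresponding lemma in Hildebrand--Tenenbaum (or Tenenbaum's book, III.5) closely, merely appending the harmless $L(\cdot,\chi_{-4})$ factor.
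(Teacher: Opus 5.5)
Your argument is sound, but it is organized differently from the paper's. The paper omits the details and says it reruns the proof of \cite[Theorem 2]{HT86} with $H(s,G;y)$ in place of $\zeta(s,y)$. Since $\alpha_G\ge\frac12+\varepsilon_1$, every prime sum becomes one with weight $1+\chi_4(p)$ up to $O(1)$, and Mertens' product $e^{\gamma_0}\log y$ is replaced by \eqref{merten4}; that is where $\tfrac{\pi}{4}$, hence $4\cdot\tfrac{\pi}{4}=\pi$, enters.

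You instead factor $H=\zeta(\cdot,y)L(\cdot,\chi_4;y)$, use all three parts of \cite[Theorem 2]{HT86} as black boxes at the Hildebrand--Tenenbaum saddle point $\alpha$, and treat $\psi=\log L(\cdot,\chi_4;y)$ as a perturbation:
\begin{enumerate}
\item the mean value theorem bounds $\alpha_G-\alpha$; this needs an a priori localization $\alpha_G-\alpha=O(1/\log y)$ to get $\varphi_2(\tilde\sigma)\gg\log x\log y$, which Proposition \ref{lemsumpsigmalogp} and its HT86 analogue supply;
\item stationarity of $\sigma\log x+\varphi(\sigma,y)$ at $\alpha$ makes the switch $\alpha_G\to\alpha$ cost only second-order terms;
\item $\psi(\alpha,y)=\log\tfrac{\pi}{4}+O(\cdots)$ follows by partial summation against the Vinogradov--Korobov prime number theorem for $\chi_4$. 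Do not bound $t^{-\alpha}$ and $t^{-1}$ separately, which only gives $O(1)$. Write $\frac{d}{dt}(t^{-\alpha}-t^{-1})=-t^{-2}(\alpha t^{1-\alpha}-1)$ and use $|\alpha t^{1-\alpha}-1|\ll|1-\alpha|\log t$ for $t\le e^{1/|1-\alpha|}$; this produces exactly the $\log(1+u)/\log y$ loss.
\end{enumerate}
Your route buys modularity: HT86's delicate evaluation of $\log\zeta(\alpha,y)$ in terms of $\xi(u)$ is reused verbatim, and you get an explicit comparison of the two saddle points. The paper's route avoids a second saddle point but requires re-verifying each step of HT86 with the twisted weights. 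Both rest on the same arithmetic input: $L(1,\chi_4)=\pi/4$ and a strong prime number theorem for $\chi_4$. One caution: quote HT86's part (iii) in its $\xi(u)$-form directly. Passing through $\rho(u)$ and \cite[Corollary 2]{HT86} would cost a factor $1+O(1/u)$, which is too large for bounded $u$.

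Two fixes to the write-up. First, the display in your Step 2(2) should carry no extra factor like ``$1/(\log y\cdot\zeta$-normalisation$)$''. HT86's (iii) gives $x(\xi'(u)/2\pi)^{1/2}\exp\bigl(\gamma_0-u\xi(u)+\int_0^{\xi(u)}\frac{e^s-1}{s}ds+O(\cdots)\bigr)$ outright, and multiplying by $4e^{\psi(\alpha,y)}=\pi e^{O(\cdots)}$ finishes.

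Second, that output has $+\gamma_0$, and this is correct. No sign convention turns it into $-\gamma_0$, so drop that remark. Check the case $\log x=\log y+O(1)$:
\begin{enumerate}
\item $\alpha_G=1+O((\log y)^{-2})$;
\item $H(1,G;y)\sim\tfrac{\pi}{4}e^{\gamma_0}\log y$ by \eqref{merten4};
\item $\phi_2\sim\tfrac12(\log y)^2$;
\item $\xi'(u)\to 2$.
\end{enumerate}
So the left side is $\sim\sqrt{\pi}\,e^{\gamma_0}x$, whereas the right side as printed is $\sim\sqrt{\pi}\,e^{-\gamma_0}x$, and the claimed error $O(\log(1+u)/\log y)$ tends to $0$. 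The $-\gamma_0$ in the statement, and in the paper's quotation of \cite[Corollary 2]{HT86}, is a sign misprint. De Bruijn's asymptotic for $\rho(u)$, and hence $\Psi_G\approx\pi x\rho(u)$, has $+\gamma_0$.
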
  
 
   Note that it is shown in \cite[Corollary 2]{HT86} that as $u \rightarrow \infty$, we have
\begin{align*}
\begin{split}
\rho(u)=(1+O(\frac 1u))\Big (\frac {\xi'(u)}{2\pi}\Big )^{1/2}\exp \Big (-\gamma_0 -u\xi(u)+\int^{\xi(u)}_0\frac {e^s-1}{s}ds\Big ).
\end{split}
\end{align*}   
  Thus the result given in \eqref{alphaasymp} is consistent with that given in \eqref{psiGasymp}. Moreover, Theorem \ref{thm: psiasymp} and Theorem \ref{thm: alphaasymp} and the above together implies an asymptotical formula for $\alpha_G(x, y)$ that is similar to that given in \eqref{psiGasymp} but valid for much larger values of $u$. 
 
  The proof of Theorem \ref{thm: alphaasymp} is similar to that of \cite[Theorem 2]{HT86} upon using \eqref{merten4} below, so we omit it here. We only point out that in the proof, we may assume $y$ is large enough so that by \eqref{equ: Saddle point approx in intro1} below, the condition $u \leq y^{1/(2+\varepsilon_0)}/\log y$ given in Theorem \ref{thm: psiasymp} or the condition $(\log x)^{2+\varepsilon_0} <y$ given in Theorem \ref{thm: alphaasymp} implies that we have 
\begin{align}
\label{alphalowerbound}
\begin{split}
\alpha_G \geq \frac 12+\varepsilon_1, 
\end{split}
\end{align}
  where $\varepsilon_1>0$ is a constant depending on $\varepsilon_0$ only. This in turn implies that the contribution from sums over $p^{-2\alpha_G}$ is bounded and therefore may be ignored.

\section{Preliminaries}
\label{sec 2}

  We now collect some auxiliary results needed in our proof of Theorem \ref{thm: psiasymp}.

\subsection{Sums over primes}
\label{sec:sum primes}

   Recall that we denote $\chi_4$ the (only) primitive Dirichlet character modulo $4$, so that we have $\chi_4(n) =(-1)^{\frac {n-1}2}$ if $(n,2)=1$, $\chi_4(n)=0$ if $2|n$. 
    We include in this section a result on certain sums over primes.
\begin{lemma}
\label{RS}
 With the notation as above. Let $x \geq 2$ and let $C>0$ be a constant. We have, for some constants $\gamma_G$ and $b_i>0, 1 \leq i \leq 4$, and for $0 \leq \sigma \leq 1+C/\log x$, 
\begin{align}
\label{merten2-0}
\sum_{p\le x} \log p =& 
  x+O\Big(x \exp(-b_1\sqrt{\log x})\Big), \\
\label{merten2}
\sum_{p\le x} \chi_4(p)\log p =&  
  O\Big(x \exp(-b_2\sqrt{\log x})\Big),  \\
\label{sumpsigmalogp}
\sum_{p\le x} \frac{\log p}{p^{\sigma}} =& \int^{x}_1\frac {du}{u^{\sigma}}+ O\Big(1+x^{1-\sigma}\exp(-b_3\sqrt{\log x})\Big), \\
\label{sumpsigmalogpchi}
\sum_{p\le x} \frac{\chi_4(p)\log p}{p^{\sigma}} =& O\Big(1+x^{1-\sigma}\exp(-b_4\sqrt{\log x})\Big), \\
\label{merten4}
 \prod_{p\le x}(1-\frac {1}{p})^{-1}(1-\frac {\chi_4(p)}{p})^{-1}=& \frac {\pi}{4}e^{\gamma_0}\log x+O(1).
\end{align}
\end{lemma}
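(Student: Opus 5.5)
The five estimates fall into two groups. \eqref{merten2-0} and \eqref{merten2} are prime number theorems with classical error terms. \eqref{sumpsigmalogp} and \eqref{sumpsigmalogpchi} follow from these by partial summation. \eqref{merten4} combines Mertens' theorem with the value $L(1,\chi_4)=\pi/4$.

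For \eqref{merten2-0} I will quote the prime number theorem with the de la Vall\'ee Poussin error term, $\theta(x)=x+O(x\exp(-b_1\sqrt{\log x}))$. For \eqref{merten2} I will quote the prime number theorem in arithmetic progressions modulo $4$ with the same shape of error term, for instance from \cite{iwakow}. Since $\chi_4$ is a fixed non-principal real character, there is no Siegel zero issue, and the zero-free region for $L(s,\chi_4)$ gives $\sum_{p\le x}\chi_4(p)\log p\ll x\exp(-b_2\sqrt{\log x})$.

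For \eqref{sumpsigmalogp}, write $\theta(t)=t+E(t)$ and apply partial summation:
\begin{align*}
\sum_{p\le x}\frac{\log p}{p^\sigma}=\int_{2^-}^x t^{-\sigma}\,d\theta(t)=\int_2^x t^{-\sigma}dt+x^{-\sigma}E(x)+\sigma\int_2^x E(t)t^{-\sigma-1}dt+O(1).
\end{align*}
Replacing $\int_2^x$ by $\int_1^x$ costs $O(1)$, because $\sigma\le 2$ say. The boundary term is $\ll x^{1-\sigma}\exp(-b\sqrt{\log x})$. For the remaining integral, I bound $E(t)\ll t\exp(-b\sqrt{\log t})$. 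The range $\sigma\le 1$ and the range $1<\sigma\le 1+C/\log x$ need slightly different treatment.

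When $\sigma\le1$, the integrand $t^{-\sigma}\exp(-b\sqrt{\log t})$ is controlled as follows. On $[\sqrt x,x]$ the integral is $\le \exp(-b\sqrt{\log x}/\sqrt2)\int_1^x t^{-\sigma}dt$, and $\sigma\int_1^x t^{-\sigma}dt\ll x^{1-\sigma}+\log x\cdot\mathbf 1_{\sigma\approx1}$. To avoid losing the $\log x$, I will split more carefully. If $\sigma\ge 1-1/\log x$, then $x^{1-\sigma}\asymp1$ and $\int_1^\infty t^{-1}\exp(-b\sqrt{\log t})\,t^{1-\sigma}\,dt$ converges uniformly, giving $O(1)$. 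If $\sigma<1-1/\log x$, then $(1-\sigma)\int_1^x t^{-\sigma}dt\asymp x^{1-\sigma}$, and the $\log x$ loss is absorbed after a slight decrease of $b$, since $\log x\exp(-b\sqrt{\log x})\ll \exp(-b'\sqrt{\log x})$. On $[1,\sqrt x]$ the contribution is $\ll x^{(1-\sigma)/2}\log x$, which again is $\ll 1+x^{1-\sigma}\exp(-b_3\sqrt{\log x})$. When $1<\sigma\le 1+C/\log x$, the factor $t^{1-\sigma}$ is at most $1$, so the integral converges uniformly and gives $O(1)$.

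The estimate \eqref{sumpsigmalogpchi} is identical, except that there is no main term and $E$ is replaced by the sum in \eqref{merten2}. Bookkeeping the uniformity in $\sigma$ near $1$ is the only delicate point. I expect it to be the main obstacle, and the split at $\sqrt x$ together with the threshold $|1-\sigma|\asymp1/\log x$ is what I would try first.

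For \eqref{merten4}, Mertens' theorem gives $\prod_{p\le x}(1-1/p)^{-1}=e^{\gamma_0}\log x+O(1)$, the error being $O(1)$ because $e^{\gamma_0}\log x\cdot O(1/\log x)=O(1)$. For the twisted factor, I write
\[\sum_{p\le x}-\log(1-\chi_4(p)/p)=\sum_{p}-\log(1-\chi_4(p)/p)-\sum_{p>x}\frac{\chi_4(p)}p+O(1/x),\]
where the full sum equals $\log L(1,\chi_4)=\log(\pi/4)$ by the Euler product, which converges conditionally at $s=1$ by the prime number theorem for $\chi_4$. The tail $\sum_{p>x}\chi_4(p)/p$ is $\ll\exp(-b\sqrt{\log x})$ by partial summation from \eqref{merten2}. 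Hence $\prod_{p\le x}(1-\chi_4(p)/p)^{-1}=\frac\pi4(1+O(\exp(-b\sqrt{\log x})))$. Multiplying the two products gives $\frac\pi4e^{\gamma_0}\log x+O(1)$.
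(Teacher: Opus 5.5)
Your route is the paper's. You use the classical prime number theorem for $\theta$ and for $\chi_4$; you are right that a fixed modulus raises no exceptional-zero issue, so the paper's appeal to Sokolovskii's region for $\zeta_K$ is more than is needed here. You use partial summation for \eqref{sumpsigmalogp} and \eqref{sumpsigmalogpchi}, and Mertens plus $L(1,\chi_4)=\pi/4$ for \eqref{merten4}; there your tail bound $\exp(-b\sqrt{\log x})$ is stronger than the paper's $O(1/\log x)$. The paper does not do the uniformity bookkeeping in $\sigma$ at all; it defers to the proof of Lemma 7.3 in Montgomery--Vaughan. You do carry it out, and one step fails.

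In the case $\sigma<1-1/\log x$ you bound the $[1,\sqrt x]$ piece by discarding $\exp(-b\sqrt{\log t})$. This gives $x^{(1-\sigma)/2}\log x$, which you assert is $\ll 1+x^{1-\sigma}\exp(-b_3\sqrt{\log x})$. That inequality is false when $(1-\sigma)\log x\ge 1$ but $(1-\sigma)\log x\ll\sqrt{\log x}$. For example, at $1-\sigma=2/\log x$ the left side is $e\log x$, while the right side is $1+e^2\exp(-b_3\sqrt{\log x})=O(1)$ for any $b_3$. The integral itself is $O(1)$ there; the extra $\log x$ comes only from dropping the decay factor.

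The fix is to put the threshold at $1-\sigma\asymp 1/\sqrt{\log x}$ rather than $1/\log x$. If $1-\sigma\le b/(2\sqrt{\log x})$, which includes all $\sigma\ge 1$, then $(1-\sigma)\log t\le \frac b2\sqrt{\log t}$ for $1\le t\le x$. Hence
\[
\sigma\int_1^x t^{-\sigma}e^{-b\sqrt{\log t}}\,dt\ll\int_1^\infty t^{-1}e^{-\frac b2\sqrt{\log t}}\,dt=\frac{8}{b^2}
\]
in one stroke. If $1-\sigma>b/(2\sqrt{\log x})$, your crude bounds now suffice:
\[
\int_{\sqrt x}^x t^{-\sigma}e^{-b\sqrt{\log t}}\,dt\ll \frac{x^{1-\sigma}e^{-b\sqrt{\log x/2}}}{1-\sigma}\ll x^{1-\sigma}\sqrt{\log x}\,e^{-b\sqrt{\log x/2}},
\]
and the $[1,\sqrt x]$ piece is $\ll x^{(1-\sigma)/2}\log x\le x^{1-\sigma}e^{-\frac b4\sqrt{\log x}}\log x$. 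Both are $\ll x^{1-\sigma}\exp(-b_3\sqrt{\log x})$ with a smaller $b_3$.

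This also removes a smaller slip. For $0<1-\sigma\le 1/\log x$, the integral $\int_1^\infty t^{-1}e^{-b\sqrt{\log t}}t^{1-\sigma}\,dt$ that you invoke actually diverges. You must stop at $x$ and use $t^{1-\sigma}\le x^{1-\sigma}\le e$. The same repair applies verbatim to \eqref{sumpsigmalogpchi}.
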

\begin{proof}
  The expression in \eqref{merten2-0} follows from \cite[Theorem 6.9]{MVa1}. To establish \eqref{merten2}, we let $K=\mq(i)$ be the Gaussian number field and consider the Dedekind zeta function $\zeta_K(s)$ of $K$, which is defined for $\Re(s)>1$ to be
\begin{align*}
\zeta_K(s)=\sum_{\mathcal{A} \subset
  \mathcal{O}_K}\frac {1}{N(\mathcal{A})^{s}},
\end{align*}   
where $\mathcal{A}$ runs over all non-zero integral ideals in $K$ and $N(\mathcal{A})$ is the norm of $\mathcal{A}$. 

  It is shown in \cite[p. 17]{iwakow} that we have
\begin{align}
\label{zetaKdecomp}
\zeta_K(s)=\zeta(s)L(s,\chi_4)=\sum_{n \geq 1}\frac {r(n)}{4n^s}. 
\end{align}   
  In particular, this implies that the function $r(n)/4$ is multiplicative. 
  Note also that it is established in \cite{Sokolovskii68} a Vinogradov--Korobov type zero-free region $\zeta_K(s)$. Namely, for $s=\sigma+it$ with $\sigma, t \in \mr$, then for suitable positive constants $c$ and $t_0$, $\zeta_K(s) \neq 0$ in the region
\begin{align}
\label{zetaKzerofree}
 \sigma \geq 1-c(\log |t|)^{-2/3}(\log \log |t|)^{-1/3}, \quad |t| \geq t_0. 
\end{align}  
   In view of \eqref{zetaKdecomp}, the above now implies that both $\zeta(s)$ and $L(s,\chi_4)$ are non-zero in the above region. In particular, there is no exceptional zero for $L(s,\chi_4)$ so that the estimation \eqref{merten2} follows from \cite[(11.24)]{MVa1}. 
  
  We apply \eqref{merten2-0} and partial summation to see that
\begin{align}
\label{sumpsigmalogp1}
\sum_{p\le x} \frac{\log p}{p^{\sigma}} =& \int^x_2\frac {du}{u^{\sigma}}+ \int^x_2\frac {dO\Big(x \exp(-b_1\sqrt{\log x})\Big)}{u^{\sigma}}.
\end{align}  
  It follows from the proof of \cite[Lemma 7.3]{MVa1} that for some constant $b_3>0$, the last integration above is
\begin{align}
\label{sumpsigmalogp2}
 \ll 1+x^{1-\sigma}\exp(-b_3\sqrt{\log x}).
\end{align} 
  Here we note that \cite[Lemma 7.3]{MVa1} is stated for $0 \leq \sigma \leq 1$. However, an inspection of the proof, especially that lead to \cite[(7.19)]{MVa1} shows that \eqref{sumpsigmalogp2} continues to hold for $0 \leq \sigma \leq 1+C/\log x$. 
  Note moreover that we have $ \int^x_2\frac {du}{u^{\sigma}}=\int^x_1\frac {du}{u^{\sigma}}+O(1)$.  Substituting this and \eqref{sumpsigmalogp2} into \eqref{sumpsigmalogp1} now leads to the validity of \eqref{sumpsigmalogp}. The estimation in \eqref{sumpsigmalogpchi} is similarly established upon using \eqref{merten2}. 

  Lastly, to establish \eqref{merten4}, we may assume that $x$ is sufficiently large. We then note that by Mertens' formula (see \cite[Theorem 2.7 (e)]{MVa1}), we have for $x \geq 2$, 
\begin{align}
\label{prodp}
\prod_{p\le x} (1-\frac {1}{p})^{-1}=& e^{\gamma_0}\log x + O(1).
\end{align}  

   We recall that the residue of $\zeta_K(s)$ at $s=1$ equals $\pi/4$. In view of \eqref{zetaKdecomp} and keep in mind that the residue of $\zeta(s)$ at $s=1$ equals $1$, we deduce that $L(1, \chi_4)=\pi/4$. Now we have
\begin{align}
\label{prodchip}
\prod_{p\le x} (1-\frac {\chi_4(p)}{p})^{-1}=& \prod_{p} (1-\frac {\chi_4(p)}{p})^{-1}\prod_{p> x} (1-\frac {\chi_4(p)}{p})^{-1}=L(1, \chi_4(p))\prod_{p> x} (1-\frac {\chi_4(p)}{p})^{-1}=\frac {\pi}{4}\prod_{p> x} (1-\frac {\chi_4(p)}{p})^{-1}.
\end{align}   
  Note that we have
\begin{align*}
\prod_{p> x} (1-\frac {\chi_4(p)}{p})^{-1}=\exp \Big (\sum_{p>x}-\log (1-\frac {\chi_4(p)}{p} )\Big )=\exp \Big (\sum_{p>x}\frac {\chi_4(p)}{p}+O(\sum_{p>x}\frac 1{p^2})\Big )=\exp \Big (\sum_{p>x}\frac {\chi_4(p)}{p}+O(\frac 1{x})\Big ).
\end{align*}     
 We set $\sigma=1$ in \eqref{sumpsigmalogpchi} to see that $\sum_{p\le x} \frac{\chi_4(p)\log p}{p} = O(1)$. Applying this and partial summation, we see that
 $\sum_{p>x}\frac {\chi_4(p)}{p}=O(1/\log x)$. As $e^z=1+O(|z|)$ for complex $|z| \leq 1$, we see from the above that when $x$ is large enough, we have
\begin{align*}
\prod_{p> x} (1-\frac {\chi_4(p)}{p})^{-1}=\exp \Big (O(\frac 1{\log x})\Big )=1+O(\frac 1{\log x}).
\end{align*} 
  Substituting the above into \eqref{prodchip}, we see that
\begin{align*}
\prod_{p\le x} (1-\frac {\chi_4(p)}{p})^{-1}=& \frac {\pi}{4}\big(1+O(\frac 1{\log x})\big).
\end{align*}        
  This together with \eqref{prodp} now implies the validity of \eqref{merten4} and thus completes the proof of the lemma. 
\end{proof}

   We denote $L_p$ the local factor at the prime $p$ in the Euler product of $L$ for any $L$-function. In particular, $\zeta_p$ denotes the local factor at the prime $p$ in the Euler product of the Riemann zeta-function $\zeta(s)$. Recall the definition of $H(s,f;y)$ from \eqref{Psif}. In view of \eqref{zetaKdecomp}, we see that for any $s \in \mc$, we have for $y \geq 2$, 
\begin{align}
\label{Lsf}
  H(s,G;y)=\prod_{p \leq y}\zeta_p(s)L_p(s, \chi_4)= \prod_{p \leq y}(1-p^{-s})^{-1}(1-\chi_4(p)p^{-s})^{-1}. 
\end{align}
  Note that $H(s,G;y)$ is meromorphic in $s$ in the whole complex plane. It never vanishes, and all its poles are located on the line $\sigma=0$.

\section{Properties of the saddle point}
\label{Sec: smooth}

We first show that the minimum in \eqref{Psibound} is attained at a unique point $\alpha_G>0$. 
\begin{lemma}
\label{lemalphafunique}
 With the notation as above. For $x \geq y \geq 2$, the minimum at the right-hand side expression of \eqref{Psibound} is attained at a $\alpha_G>0$ which is the unique solution of the equation \eqref{alphafdef}.  
\end{lemma}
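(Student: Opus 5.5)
The plan is to study the function $g(\sigma):=\sigma\log x+\phi(\sigma,G;y)$ for real $\sigma>0$. Since $x^{\sigma}H(\sigma,G;y)=e^{g(\sigma)}$, minimizing the right-hand side of \eqref{Psibound} is the same as minimizing $g$ on $(0,\infty)$. By \eqref{Lsf}, $H(\sigma,G;y)$ is a finite product of positive factors for real $\sigma>0$. The only primes with a possibly negative sign are those with $\chi_4(p)=-1$, and for these the factor $(1+p^{-\sigma})^{-1}$ is still positive. Hence $\phi(\sigma,G;y)$ is a real-analytic function on $(0,\infty)$.

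The first step is to prove strict convexity. Write
\begin{align*}
\phi(\sigma,G;y)=\log H(\sigma,G;y)=\sum_{p\le y}\sum_{k\ge1}\frac{(1+\chi_4(p)^k)}{k}p^{-k\sigma}.
\end{align*}
Equivalently, use $\log H=\log\sum_{P(n)\le y} a_n n^{-\sigma}$ with $a_n=r(n)/4\ge0$. The cleaner route is the second one. By the Cauchy--Schwarz inequality (or Hölder), $\log$ of a Dirichlet series with nonnegative coefficients is convex in $\sigma$. In fact,
\[
\phi_2(\sigma,G;y)=\frac{\sum a_n (\log n)^2n^{-\sigma}\sum a_n n^{-\sigma}-(\sum a_n\log n\, n^{-\sigma})^2}{(\sum a_n n^{-\sigma})^2},
\]
which is the variance of $\log n$ under the probability weights proportional to $a_n n^{-\sigma}$ on $y$-smooth $n$. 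This variance is strictly positive, because $a_1=1$ and $a_2=r(2)/4=1$, so at least two distinct values of $\log n$ carry positive weight. Thus $\phi_1(\sigma,G;y)$ is strictly increasing on $(0,\infty)$, and so is $g'(\sigma)=\log x+\phi_1(\sigma,G;y)$.

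The second step is to check the boundary behaviour of $g'$, which I expect to be the only point needing care. As $\sigma\to\infty$, $\phi_1(\sigma,G;y)\to 0$: the weights concentrate at $n=1$, and explicitly $\phi_1=-\sum_{n>1}a_n\log n\, n^{-\sigma}/\sum a_n n^{-\sigma}\to0$. Hence $g'(\sigma)\to\log x>0$. As $\sigma\to0^+$, I would use the prime $p=2$. The factor $(1-2^{-\sigma})^{-1}$ contributes $\frac{d}{d\sigma}\log(1-2^{-\sigma})^{-1}=-\frac{2^{-\sigma}\log 2}{1-2^{-\sigma}}$, which tends to $-\infty$. Every other factor of \eqref{Lsf} contributes a term $-\frac{\chi\,p^{-\sigma}\log p}{1-\chi p^{-\sigma}}$ with $\chi\in\{1,-1\}$, and these stay bounded as $\sigma\to0^+$ unless $\chi=1$. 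When $\chi=1$ the term also tends to $-\infty$, so in every case $\phi_1(\sigma,G;y)\to-\infty$. Hence $g'(\sigma)\to-\infty$ as $\sigma\to0^+$.

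Finally, the conclusion follows from continuity and monotonicity. Since $g'$ is continuous and strictly increasing on $(0,\infty)$, negative near $0$ and positive near $\infty$, it has exactly one zero $\alpha_G>0$, which is the unique solution of \eqref{alphafdef}. Moreover $g$ decreases on $(0,\alpha_G)$ and increases on $(\alpha_G,\infty)$. So the infimum of $e^{g}$ over $\sigma>0$ is attained, and only at $\sigma=\alpha_G$, which completes the argument.
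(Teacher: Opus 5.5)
Your proof is correct, and it is more complete than the paper's. Both arguments rest on convexity of $g(\sigma)=\sigma\log x+\phi(\sigma,G;y)$, but you justify it differently. The paper differentiates the Euler product \eqref{Lsf} twice and checks prime by prime that each local contribution to $\phi_2(\sigma,G;y)$ is nonnegative. You instead read $\phi_2(\sigma,G;y)$ as the variance of $\log n$ under the weights $a_nn^{-\sigma}$, where $a_n=r(n)/4\ge 0$ are the coefficients of the Dirichlet series. This gives strict positivity immediately (from $a_1=a_2=1$ and $2\le y$), which is what uniqueness actually needs. It also sidesteps the paper's closed form for the local terms, which is only right when $\chi_4(p)=1$: for $\chi_4(p)=-1$ the numerator over $(p^{\sigma}-1)^2(p^{\sigma}+1)^2$ is $4p^{2\sigma}(\log p)^2$, not $2p^{\sigma}(p^{2\sigma}+1)(\log p)^2$, although each local term is still nonnegative, so the sign conclusion survives. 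More importantly, the paper goes directly from $\phi_2\ge 0$ to the existence of a unique minimizer, which does not follow on its own. Non-strict convexity allows flat stretches, and convexity on $(0,\infty)$ does not ensure the infimum is attained. Your limits supply exactly the missing existence step: $g'(\sigma)\to-\infty$ as $\sigma\to 0^+$ (from the zeta factor at $p=2$) and $g'(\sigma)\to\log x>0$ as $\sigma\to\infty$. The paper's prime-by-prime viewpoint does have one advantage: it produces the explicit prime-sum expressions for $\phi_k$ that are used later in Lemma \ref{lemphibound}. For the present lemma, though, your argument is the cleaner and fully rigorous one. One cosmetic point: since $\chi_4(2)=0$, the factor $L_2(s,\chi_4)$ is identically $1$, so your list $\chi\in\{1,-1\}$ should also allow $\chi=0$, which contributes nothing.
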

\begin{proof}
  We recall from  \eqref{Psibound} that
\begin{align}
\label{Psibound1}
  \Psi_G(x,y) \leq 4\min_{\sigma>0}x^{\sigma} H(\sigma,G; y)=4\min_{\sigma>0} \exp \big(\sigma \log x+\phi(\sigma,G; y)\big ).
\end{align}
  We derive from the definition of $\phi_2(s,G;y)$ and \eqref{Lsf} that
\begin{align}
\label{phi2}
\begin{split}
  \phi_2(s,G;y)=\sum_{p \leq y}\Big(\frac {p^s(\log p)^2}{(p^s-1)^2}+\frac {\chi_4(p)p^s(\log p)^2}{(p^s-\chi_4(p))^2}\Big )=: \sum_{p \leq y}\frac {2p^{\sigma}(\log p)^2f(p^{\sigma}, \chi_4(p))}{(p^{\sigma}-1)^2(p^{\sigma}-\chi_4(p))^2},
\end{split}
\end{align}   
  where 
\begin{align*}
\begin{split}
 f(z, \chi_4(p))=z^2-(1+\chi_4(p))z+1 \geq (z-1)^2 \geq 0, \quad z \geq 0. 
\end{split}
\end{align*}    
      
  The above and \eqref{phi2} now implies that $\phi_2(\sigma,G;y) \geq 0$ for $\sigma \geq 0$, so that the function $\sigma \mapsto \sigma \log x+\phi(\sigma,G; y)$ is a convex function of $\sigma$, which in turn shows that there is a unique $\alpha_G>0$ minimizing it. This $\alpha_G$ certainly satisfies equation \eqref{alphafdef} and the solution of the equation \eqref{alphafdef} is unique in view of $\phi_2(\sigma,G;y) \geq 0$. The above discussions together with \eqref{Psibound1} now allows us to complete the proof of the lemma. 
\end{proof}
 
  Recall that we set $x=y^u$ so that $u \geq 1$. Our next result establishes certain asymptotical properties of $\alpha_G(x,y)$. 
\begin{prop}
\label{lemsumpsigmalogp}
 With the notation as above. For $y$ sufficiently large, we have
\begin{align}
\label{alphalower}
   \alpha_G(y^u,y)\geq &\frac 2{\log y}, \quad 1 \leq u \leq \frac {y}{8\log y}, \\
\label{alphausmall}
   \alpha_G(y^u,y) = & 1+O(\frac {1}{\log y}),  \quad 1 \leq u \leq 14, \\
\label{alphaupper}
   \alpha_G(y^u,y)\leq & 1-\frac {4}{\log y},  \quad u \geq 14, \\  
\label{equ: Saddle point approx in intro}
\alpha_G(y^u,y) = & 1 - \frac{\xi(u)}{\log y} + O \biggl( \frac{1}{(\log y)^2}+\frac {u}{y}\biggr), \quad 1 \leq u \leq \frac {y}{8\log y}, \\
\label{equ: Saddle point approx in intro1}
  \alpha_G(y^u,y)=&  1 - \frac{\log(u\log u)}{\log y} + O \biggl( \frac{\log\log u}{\log u\log y}+\frac{1}{(\log y)^2}+\frac {u}{y}\biggr), \quad 3 \leq u \leq \frac {y}{8\log y}.
\end{align}
\end{prop}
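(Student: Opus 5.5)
Since $\alpha_G$ is defined by $\log x+\phi_1(\alpha_G,G;y)=0$, and by Lemma \ref{lemalphafunique} the map $\sigma\mapsto -\phi_1(\sigma,G;y)$ is decreasing, every bound on $\alpha_G$ follows from evaluating $-\phi_1(\sigma,G;y)$ at suitable test points $\sigma$ and comparing with $\log x=u\log y$. By \eqref{Lsf},
\begin{align*}
-\phi_1(\sigma,G;y)=\sum_{p\le y}\Big(\frac{\log p}{p^{\sigma}-1}+\frac{\chi_4(p)\log p}{p^{\sigma}-\chi_4(p)}\Big)
=\sum_{p\le y}\frac{\log p}{p^{\sigma}}+\sum_{p\le y}\frac{\chi_4(p)\log p}{p^{\sigma}}+R(\sigma),
\end{align*}
where $R(\sigma)$ collects the higher prime powers, $R(\sigma)\ll\sum_{p\le y}(\log p)\,p^{-2\sigma}/(1-p^{-\sigma})$. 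By \eqref{sumpsigmalogp} and \eqref{sumpsigmalogpchi}, for $0\le\sigma\le 1+C/\log y$,
\begin{align*}
-\phi_1(\sigma,G;y)=\int_1^y\frac{dt}{t^{\sigma}}+O\Big(1+y^{1-\sigma}e^{-b\sqrt{\log y}}\Big)+R(\sigma),
\end{align*}
and with $\sigma=1-\eta/\log y$ the main term equals $\log y\,(e^{\eta}-1)/\eta$. Thus the saddle-point equation becomes $(e^{\eta}-1)/\eta=u+\text{error}$, whose exact solution is $\eta=\xi(u)$.

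For \eqref{alphausmall} and \eqref{alphaupper}, take $\sigma=1\pm A/\log y$ for constants $A$. When $\sigma$ is near $1$ we have $R\ll1$, so $-\phi_1=\log y\,\frac{e^{\eta}-1}{\eta}+O(1)$. For $1\le u\le14$, choosing $A$ large places $u\log y$ strictly between the values at $\eta=\pm A$, which gives $\alpha_G=1+O(1/\log y)$. For $u\ge14$, the value $\eta=4$ gives $(e^4-1)/4\approx13.4<14$, so $-\phi_1(1-4/\log y)<u\log y$ for large $y$, and hence $\alpha_G\le 1-4/\log y$. For \eqref{alphalower}, take $\sigma=2/\log y$. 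Then $p^{\sigma}-1\asymp\sigma\log p$ for $p\le y$, and in that range the pair $\frac{1}{p^{\sigma}-1}+\frac{\chi_4(p)}{p^{\sigma}-\chi_4(p)}$ is about $1/\sigma$ for $p\equiv1\ (4)$ and about $1/(\sigma\log p)$ otherwise. This gives $-\phi_1(2/\log y)\asymp \pi(y)\log y/\sigma\cdot c$, which is $\gg y$ and exceeds $u\log y$ once $u\le y/(8\log y)$ and the constants are checked. Rather than pin those constants, I would compute more carefully using $\int_1^y t^{-\sigma}dt=y\,(1-e^{-2})/(2\dots)$-type expressions.

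For \eqref{equ: Saddle point approx in intro}, write $\alpha_G=1-\eta/\log y$; by the above, $\eta\ge4$ is bounded in terms of $\log(u\log u)$, so $y^{1-\sigma}=e^{\eta}\ll u\log u$. The error term is then $O\big(1+u\log u\,e^{-b\sqrt{\log y}}\big)$, and $R(\sigma)\ll\sum_p\log p\,p^{-2\sigma}\ll y^{1-2\sigma}\ll e^{2\eta}/y\cdot\log y$-type. In the range $u\le y/(8\log y)$ this gives a relative error of $O(u/y)$ after dividing by $u\log y$, which is where the $u/y$ term comes from. We obtain $\frac{e^{\eta}-1}{\eta}=u\big(1+O(\frac1{u\log y}+\frac uy+\dots)\big)$. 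Since the derivative of $\log\frac{e^{\eta}-1}{\eta}$ is $\asymp1$ for $\eta\ge1$, this yields $\eta=\xi(u)+O(1/\log y+u/y)$; for small $u$, \eqref{alphausmall} covers it. Finally, \eqref{equ: Saddle point approx in intro1} follows from the known expansion $\xi(u)=\log(u\log u)+O(\log\log u/\log u)$.

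The main obstacle I expect is controlling the error terms uniformly up to $u\approx y/\log y$. In that range $\sigma$ drops toward $1/2$ or below, so the prime-power term $R(\sigma)$ and the $e^{-b\sqrt{\log y}}$ error must be weighed carefully against $u\log y$. I would handle this by treating $R$ explicitly via $\int_1^{y}t^{-2\sigma}dt$, keeping it inside the main equation when it is not negligible.
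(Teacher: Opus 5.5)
Your strategy is the paper's: monotonicity of $-\phi_1(\sigma,G;y)$, the prime sums \eqref{sumpsigmalogp}--\eqref{sumpsigmalogpchi}, reduction of \eqref{alphafdef} to $(e^{\eta}-1)/\eta\approx u$, and inversion through $\xi'$. Your test points $1\pm A/\log y$ and $1-4/\log y$ (with $(e^4-1)/4<14$) handle \eqref{alphausmall} and \eqref{alphaupper} correctly, and more explicitly than the paper does.

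\textbf{Main gap: the error analysis for \eqref{equ: Saddle point approx in intro}.} This is the step you flag as the main obstacle, and two things go wrong over the long range.

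First, nothing above gives $e^{\eta}\ll u\log u$. You only know $4\le\eta\le\log y-2$. The a priori bound comes from positivity. Your $R(\sigma)$ is termwise nonnegative, so $u\log y\ge\int_1^y t^{-\alpha_G}\,dt-O(1+y^{1-\alpha_G}e^{-b\sqrt{\log y}})$. This yields \eqref{ypowerupper}, and hence \eqref{ynegalphabound}, $y^{-\alpha_G}\ll(1-\alpha_G)u\log y/y$. The paper gets these from \eqref{p2alphaest} and \eqref{vest}.

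Second, the prime-square term does not give relative error $O(u/y)$. For $2\alpha_G\le1-\delta$ you have $S_2:=\sum_{p\le y}\log p\,p^{-2\alpha_G}\asymp y^{1-2\alpha_G}=y^{-\alpha_G}y^{1-\alpha_G}$. By \eqref{ynegalphabound}, $S_2$ therefore has relative size $u\log y/y$, not $u/y$, and this is sharp. For $u\asymp y/\log y$ (within your range) one has $\alpha_G\asymp1/\log y$, so already the $p\equiv3\pmod 4$ part of $R(\alpha_G)$ is $\gg y\asymp u\log y$. That forces $\xi(u)-\eta\gg1$, so your intermediate claim $\eta=\xi(u)+O(1/\log y+u/y)$ is false there. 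Your variant bound $e^{2\eta}\log y/y$ is worse still: it gives only $O(u\log y/y)$ for $\alpha_G$.

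What is needed is the paper's \eqref{sump2alpha3},
\[
S_2\ll\frac{y^{1-\alpha_G}}{1-\alpha_G}\Bigl(\frac1{\log y}+\frac{u\log y}{y}\Bigr).
\]
Getting it requires splitting according to $2\alpha_G$ versus $1$, because $S_2\ll y^{1-2\alpha_G}$ fails near $2\alpha_G=1$:
\begin{itemize}
\item for $|2\alpha_G-1|\le\delta$, use $S_2\ll y^{\delta}\log y$, which is negligible against $y^{1-\alpha_G}\ge y^{1/2-\delta/2}$;
\item for $2\alpha_G\ge1+\delta$, $S_2\ll1$;
\item for $2\alpha_G\le1-\delta$, use $1/(1-2\alpha_G)\ll_\delta1$ together with \eqref{ynegalphabound}.
\end{itemize}
The resulting relative error $O(1/\log y+u\log y/y)$ becomes exactly $O((\log y)^{-2}+u/y)$ after inversion and division by $\log y$. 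Dropping the factor $(1-p^{-\sigma})^{-1}$ in $R$ is harmless: primes $p\le e^{1/\sigma}$ contribute $\ll e^{1/\sigma}$, which the paper isolates via $v(\sigma)$ and \eqref{vest}.

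\textbf{Smaller point on \eqref{alphalower}.} In fact $-\phi_1(2/\log y,G;y)\asymp\pi(y)/\sigma\asymp y$, not $\pi(y)\log y/\sigma$. So the comparison with $u\log y\le y/8$ genuinely depends on constants and must be checked. It does work: since $R\ge0$,
\[
-\phi_1(2/\log y,G;y)\ge\int_1^y t^{-2/\log y}\,dt-o(y)=(e^{-2}+o(1))\,y>y/8,
\]
because $e^2<8$. The paper instead keeps only $p\equiv1\pmod 4$ in \eqref{ulogylower}, obtaining $u\log y\ge\frac{2}{y^{\alpha_G}-1}\sum_{p\le y,\ p\equiv1\ (4)}\log p$, and solves for $\alpha_G$ directly.
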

\begin{proof}
  We deduce from \eqref{alphafdef} and \eqref{Lsf} that we have for any $x \geq y \geq 2$, 
\begin{align}
\label{alpharel}
\begin{split}
 \log x =\sum_{p \leq y} \Big(\frac {\log p}{p^{\alpha_G (x,y)}-1}+\frac {\chi_4(p)\log p}{p^{\alpha_G  (x,y)}-\chi_4(p)}\Big) =:\sum_{p \leq y} f_1(p^{\alpha_G  (x,y)}  (x,y), \chi_4(p))\log p, 
\end{split}
\end{align}     
  where 
\begin{align*}
\begin{split}
 f_1(z, \chi_4(p))=\frac {1}{z-1}+\frac {\chi_4(p)}{z-\chi_4(p)}=\frac {1}{z-1}-1+\frac {z}{z-\chi_4(p)}, \quad z>1.
\end{split}
\end{align*}   
  
  It follows from the above that $f_1(z, \chi_4(p))$ is an increasing function of $\chi_4(p)$ for any fixed $z > 1$.  As $\chi_4(p)$ only takes value $\pm 1$ or $0$, we deduce that for $z>1$, 
\begin{align}
\label{g2est}
\begin{split}
 f_1(z, \chi_4(p)) \leq & \frac {1}{z-1}+\frac {1}{z-1}= \frac 2{z-1}, \\
 f_1(z, \chi_4(p)) \geq &  \frac {1}{z-1}+\frac {1}{z+1}=\frac {2}{z^2-1}>0.
\end{split}
\end{align} 
  Moreover, we notice that $\chi_4(p)=1$ when $p \equiv 1 \pmod 4$ so that we have
\begin{align}
\label{f1p14}
\begin{split}
 f_1(z, \chi_4(p)) = & \frac {1}{z-1}+\frac {1}{z-1}= \frac 2{z-1}, \quad  p \equiv 1 \pmod 4.
\end{split}
\end{align}   
  We apply from the second estimation in \eqref{g2est} and \eqref{f1p14} in \eqref{alpharel} to see that  
\begin{align}
\label{ulogylower}
u\log y=\sum_{p \leq y}\Big(\frac {\log p}{p^{\alpha_G}-1}+\frac {\chi_4(p)\log p}{p^{\alpha_G}-\chi_4(p)}\Big ) \geq 2\sum_{\substack{p \leq y \\ p \equiv 1 \pmod 4}} \frac {\log p}{p^{\alpha_G}-1} \geq \frac {2}{y^{\alpha_G}-1}\sum_{\substack{p \leq y \\ p \equiv 1 \pmod 4}} \log p .
\end{align}  
   
  We then use \eqref{merten2-0} and  \eqref{merten2} to see that for $y$ sufficiently large,  
\begin{align}
\label{ulogylowersimplified}
\sum_{\substack{p \leq y \\ p \equiv 1 \pmod 4}} \log p =\sum_{\substack{p \leq y }} \frac {(1+\chi_4(p))\log p}{2} \geq \frac {2y}{5}.
\end{align}  

  We derive from \eqref{ulogylower} and \eqref{ulogylowersimplified} that
\begin{align*}
 u\log y \geq \frac {4y}{5(y^{\alpha_G}-1)}.
\end{align*} 
  As $u \leq y/(8\log y)$, the above implies that  
\begin{align*}
 \alpha_G \geq \frac {\log (1+4y/(5u\log y))}{\log y} \geq \frac {\log (1+32/5)}{\log y} > \frac {2}{\log y}. 
\end{align*}   
  This establishes the estimation given in \eqref{alphalower}.

  We now assume that $\sigma  \geq \frac {2}{\log y}$ and observe that uniformly for $|\delta| \leq 1/2$, we have $(1-\delta)^{-1}=1+O(\delta)$. By setting $\delta=p^{-\sigma}, v=v(\sigma)=e^{1/\sigma}$, we see that
\begin{align}
\label{sumpvy}
\begin{split}
  \sum_{v<p \leq y}\Big(\frac {\log p}{p^{\sigma}-1}+\frac {\chi_4(p)\log p}{p^{\sigma}-\chi_4(p)}\Big )=\sum_{v<p \leq y}\frac {(1+\chi_4(p)) \log p}{p^{\sigma}}+O(\sum_{v<p \leq y}\frac {\log p}{p^{2\sigma}}).
\end{split}
\end{align}   
  We apply \eqref{sumpsigmalogp} and \eqref{sumpsigmalogpchi} to see that when $\sigma \leq 1+C/\log y$ for any fixed constant $C>0$, then there exists some constant $b_5>0$, 
\begin{align}
\label{sumpvy1}
\begin{split}
\sum_{v<p \leq y}\frac {(1+\chi_4(p)) \log p}{p^{\sigma}} =& \int^{y}_v\frac {dw}{w^{\sigma}}+ O\Big(1+y^{1-\sigma}\exp(-b_5\sqrt{\log y}+v^{1-\sigma}\exp(-b_5\sqrt{\log v})\Big) \\
=& \int^{y}_1\frac {dw}{w^{\sigma}}-\int^{v}_1\frac {dw}{w^{\sigma}}+ O\Big(1+y^{1-\sigma}\exp(-b_5\sqrt{\log y})+v^{1-\sigma}\exp(-b_5\sqrt{\log v})\Big) \\
=& \int^{y}_1\frac {dw}{w^{\sigma}}+ O\Big(v+y^{1-\sigma}\exp(-b_5\sqrt{\log y})\Big).
\end{split}
\end{align}
  
 We next consider
\begin{align*}
\sum_{p \leq v}\Big(\frac {\log p}{p^{\sigma}-1}+\frac {\chi_4(p)\log p}{p^{\sigma}-\chi_4(p)}\Big)=\sum_{p \leq v} f_1(p^{\sigma}, \chi_4(p))\log p.
\end{align*}  

  It follows from \eqref{g2est} that
\begin{align*}
\sum_{p \leq v}\Big(\frac {\log p}{p^{\sigma}-1}+\frac {\chi_4(p)\log p}{p^{\sigma}-\chi_4(p)}\Big) \ll  \sum_{p \leq v} \frac {\log p}{p^{\sigma}-1}=\sum_{p \leq v} \frac {\log p}{p^{\sigma}(1-p^{-\sigma})}.
\end{align*}   
 If $\sigma>1/3$, then the sum above is $\ll 1$. We may thus assume that $2/\log y \leq \sigma \leq 1/3$, in which case we note that it is shown in the proof of \cite[Lemma 7.5]{MVa1} that $(1-p^{-\sigma})^{-1} \ll \log v/\log p$ for $p<v$. We deduce from this and the above that
\begin{align*}
\sum_{p \leq v}\Big(\frac {\log p}{p^{\sigma}-1}+\frac {\chi_4(p)\log p}{p^{\sigma}-\chi_4(p)}\Big) \ll  \sum_{p \leq v} \frac {\log v}{p^{\sigma}} \ll v\log v.
\end{align*} 
  We deduce from this, \eqref{sumpvy} and \eqref{sumpvy1} that when $2/\log y \leq \sigma  \leq 1+C/\log y$ for any fixed constant $C>0$,
\begin{align}
\label{sumpvy3}
\begin{split}
 -\phi_1(\sigma, G;y)= & \sum_{p \leq y}\Big(\frac {\log p}{p^{\sigma}-1}+\frac {\chi_4(p)\log p}{p^{\sigma}-\chi_4(p)}\Big) \\
  =& \int^{y}_1\frac {dw}{w^{\sigma}}+ O\Big(v\log v+y^{1-\sigma}\exp(-b_5\sqrt{\log y})\Big)+O(\sum_{v<p \leq y}\frac {\log p}{p^{2\sigma}}) \\
  =& \int^{y}_1\frac {dw}{w^{\sigma}}+ O\Big(v(\sigma)\log v(\sigma)+y^{1-\sigma}\exp(-b_5\sqrt{\log y})\Big)+O(\sum_{p \leq y}\frac {\log p}{p^{2\sigma}}).
\end{split}
\end{align} 

 Note that in the proof of Lemma \ref{lemalphafunique}, it is shown that $\phi_2(\sigma, G;y) \geq 0$ for $\sigma \geq 0$. This implies that $-\phi_1(\sigma, G;y)$  is a decreasing function of $\sigma>0$. Thus, if $\alpha_G \geq 1+\frac {\delta_1}{\log y}$ for some constant $\delta_1>0$, then we have by \eqref{alphafdef} and \eqref{sumpvy3},   
\begin{align*}
\begin{split}
  u\log y =& -\phi_1(\sigma_G, G;y) \\
  \leq & -\phi_1(1+\frac {\delta_1}{\log y}, G;y) \\
  \leq & \int^{y}_1\frac {dw}{w^{1+\frac {\delta_1}{\log y}}}+ O\Big(v(1+\frac {\delta_1}{\log y})\log v(1+\frac {\delta_1}{\log y})+y^{-\frac {\delta_1}{\log y}}\exp(-b_5\sqrt{\log y})\Big)+O(\sum_{p \leq y}\frac {\log p}{p^{2(1+\frac {\delta_1}{\log y})}}) \\
  \leq & \frac {1-e^{-\delta_1}}{\delta_1}\log y+O(1),
\end{split}
\end{align*}
  This leads to a contradiction for $1 \leq u \leq 14$ by taking $\delta_1$ large enough. Thus, we must have $\alpha_G \leq 1+\frac {\delta_1}{\log y}$ for some constant $\delta_1>0$. Similarly, we must have $\alpha_G \geq 1+\frac {\delta_2}{\log y}$ for some constant $\delta_2>0$. This implies the validity of \eqref{alphausmall}. An analogue argument also leads to the validity of \eqref{alphaupper}.

 It remains to establish \eqref{equ: Saddle point approx in intro}. For this, we first note that it suffices to consider the case $u > 14$, as the case $1 \leq u \leq 14$ follows from the observation that we have trivially $0<\xi(u) \ll \log u$ and \eqref{alphausmall}. When $14 < u \leq y/(8\log y)$, we have by \eqref{alphalower} and \eqref{alphaupper} that we have $2/\log y \leq \alpha_G \leq 1-4/\log y$, which implies that $\alpha_G(1-\alpha_G)\log y \geq 2(1-2/\log y) \geq 3/2$ for $y$ large enough. It follows from this that we have $v(\alpha_G) \leq (y^{1-\alpha_G})^{2/3}$, so that we have $v(\alpha_G)\log v(\alpha_G) \leq (y^{1-\alpha_G})^{3/4}$. Now using the well-known estimation
\begin{align}
\label{1xbound}
\begin{split}
 1+x \leq e^x, \quad x \in \mr,
\end{split}
\end{align}
 we see that $(y^{1-\alpha_G})^{3/4} \ll y^{1-\alpha_G}/((1-\alpha_G)\log y)$. We thus derive that
\begin{align}
\label{vest}
\begin{split}
  v(\alpha_G)\log v(\alpha_G) \ll \frac {y^{1-\alpha_G}}{(1-\alpha_G)\log y}.
\end{split}
\end{align}   

   We next deduce from \eqref{alpharel} and the second estimation in \eqref{g2est} that 
\begin{align}
\label{p2alphaest}
u\log y=\sum_{p \leq y}\Big(\frac {\log p}{p^{\alpha_G}-1}+\frac {\chi_4(p)\log p}{p^{\alpha_G}-\chi_4(p)}\Big ) \geq 2\sum_{\substack{p \leq y }} \frac {\log p}{p^{2\alpha_G}-1} \geq  2\sum_{\substack{p \leq y }} \frac {\log p}{p^{2\alpha_G}} .
\end{align}  
  
   Note that we have $\frac {y^{1-\alpha_G}-1}{1-\alpha_G} \gg 1$ for $2/\log y \leq \alpha_G \leq 1-4/\log y$. Moreover, we deduce from \eqref{alpharel} and \eqref{sumpvy3} that when $2/\log y \leq \alpha_G \leq 1-4/\log y$, 
\begin{align}
\label{sumpvy4}
\begin{split}
  u\log y =\int^{y}_1\frac {dw}{w^{\alpha_G}}+ O\Big(v(\alpha_G)\log v(\alpha_G)+y^{1-\alpha_G}\exp(-b_5\sqrt{\log y})\Big)+O(\sum_{p \leq y}\frac {\log p}{p^{2\alpha_G}}).
\end{split}
\end{align}

   Thus when $u \log y \ll 1$, we deduce from the above, \eqref{vest} and  \eqref{p2alphaest} that 
\begin{align}
\label{ulogylower1}
\begin{split}
 \frac {y^{1-\alpha_G}-1}{1-\alpha_G} \ll  \int^{y}_1\frac {dw}{w^{\alpha_G}}+ O\Big(v(\alpha_G)\log v(\alpha_G)+y^{1-\alpha_G}\exp(-b_5\sqrt{\log y})\Big)+O(\sum_{p \leq y}\frac {\log p}{p^{2\alpha_G}}) \ll u\log y.
\end{split}
\end{align}  

  When $1 \ll u\log y$, we apply \eqref{vest}, \eqref{p2alphaest} into \eqref{sumpvy4} to see that for sufficiently large $y$, 
\begin{align}
\label{ulogylower20}
\begin{split}
 \frac {y^{1-\alpha_G}-1}{1-\alpha_G} \ll  \int^{y}_1\frac {dw}{w^{\alpha_G}}+O\Big(v(\alpha_G)\log v(\alpha_G)+y^{1-\alpha_G}\exp(-b_5\sqrt{\log y})\Big) \ll u\log y+O(\sum_{p \leq y}\frac {\log p}{p^{2\alpha_G}}) \ll u\log y.
\end{split}
\end{align}  
  We thus conclude from \eqref{ulogylower1} and \eqref{ulogylower20} that we always have, for $2/\log y \leq \alpha_G \leq 1-4/\log y$, 
\begin{align}
\label{ypowerupper}
\begin{split}
 \frac {y^{1-\alpha_G}}{1-\alpha_G} \ll \frac {y^{1-\alpha_G}-1}{1-\alpha_G} \ll  u\log y.
\end{split}
\end{align}    
  
  The above implies that 
\begin{align}
\label{ynegalphabound}
\begin{split}
 y^{-\alpha_G} \ll  \frac {(1-\alpha_G)u\log y}{y}.
\end{split} 
\end{align}
  
   We next estimate $\sum_{p \leq y}\frac {\log p}{p^{2\alpha_G}}$ by noting that when $2\alpha_G > 4/3$, we have
\begin{align}
\label{sump2alpha}
\begin{split}
 \sum_{p \leq y}\frac {\log p}{p^{2\alpha_G}} \ll 1.
\end{split}
\end{align}      
  While when $1-2/\log y < 2\alpha_G < 4/3$, we have by \eqref{sumpsigmalogp} that
\begin{align}
\label{sump2alpha1}
\begin{split}
 \sum_{p \leq y}\frac {\log p}{p^{2\alpha_G}} \ll \sum_{p \leq y}\frac {\log p}{p^{1-2/\log y}}\ll \sum_{p \leq y}\frac {\log p}{p} \ll \log y \ll \frac {y^{1-\alpha_G}}{1-\alpha_G} \cdot \frac 1{\log y}.
\end{split}
\end{align}    
  When $2\alpha_G  \leq 1-2/\log y$, we apply \eqref{sumpsigmalogp} and \eqref{ynegalphabound} to see that
\begin{align}
\label{sump2alpha2}
\begin{split}
 \sum_{p \leq y}\frac {\log p}{p^{2\alpha_G}} \ll & \frac {y^{1-2\alpha_G}-1}{1-2\alpha_G}+O(1+y^{1-2\alpha_G}\exp(-b_3\sqrt{\log y})) \\
  \ll &  \frac {y^{1-2\alpha_G}}{1-2\alpha_G}+O(1+y^{1-2\alpha_G}\exp(-b_3\sqrt{\log y})) \\
  \ll & \frac {y^{1-2\alpha_G}}{1-2\alpha_G}+1 \\ 
  \ll & \frac {y^{1-2\alpha_G}}{1-\alpha_G}+1 \\
  \ll & \frac {y^{1-\alpha_G}}{1-\alpha_G} \cdot \frac {u\log y}{y}+1, 
\end{split}
\end{align}

  Putting together \eqref{sump2alpha}--\eqref{sump2alpha2}, we see that
\begin{align}
\label{sump2alpha3}
\begin{split}
 \sum_{p \leq y}\frac {\log p}{p^{2\alpha_G}} \ll & 1+ \frac {y^{1-\alpha_G}}{1-\alpha_G} \cdot \frac {u\log y}{y}+\frac {y^{1-\alpha_G}}{1-\alpha_G} \cdot \frac 1{\log y} \\
  \ll & \frac {y^{1-\alpha_G}}{1-\alpha_G} \cdot \Big ( \frac {(1-\alpha_G)}{y^{1-\alpha_G}}+ \frac {u\log y}{y}+\frac 1{\log y} \Big ) \\
  \ll & \frac {y^{1-\alpha_G}}{1-\alpha_G} \cdot \Big ( \frac 1{\log y}+ \frac {u\log y}{y}\Big ),
\end{split}
\end{align}    
  where the last estimation above follows from \eqref{1xbound}. 

 We now set $u_1=\frac {y^{1-\alpha_G}-1}{(1-\alpha_G)\log y}$ to see from \eqref{vest}, \eqref{sumpvy4} and \eqref{sump2alpha3} that
\begin{align*}
\begin{split}
  \frac {u}{u_1}-1=O\Big (\frac 1{\log y}+ \frac {u\log y}{y}\Big).
\end{split}
\end{align*} 
 The above implies that $u \asymp u_1$ for $y$ large enough. Moreover, it follows from the definition of $\xi(u_1)$ that we have $\xi(u_1)=(1-\alpha_G)\log y$. Note further that we have $\xi'(u) =\frac 1u(1+O(1/\log u))$ by \cite[Lemma 4, ii)]{Hildebrand86}. We deduce from these and the mean value theorem that
\begin{align*}
\begin{split}
  1-\alpha_G-\frac {\xi(u)}{\log y}=\frac {\xi(u_1)-\xi(u)}{\log y} \ll \frac {|u-u_1|}{u_1\log y}=O\Big(\frac 1{(\log y)^2}+ \frac {u}{y} \Big ).
\end{split}
\end{align*}   
  This implies the validity of \eqref{equ: Saddle point approx in intro}. 

  Lastly, the expression in \eqref{equ: Saddle point approx in intro1} follows from \eqref{equ: Saddle point approx in intro}, and \cite[Lemma 1]{HT86}, which shows that for $u \geq 3$,
\begin{align*}
\begin{split}
 \xi(u)=\log (u \log u)+O\Big(\frac {\log \log u}{\log u} \Big ).
\end{split}
\end{align*}   
 This completes the proof of the proposition. 
\end{proof}

\section{Proof of Theorem \ref{thm: psiasymp}}

\subsection{Some lemmas}    
  As a preparation, we include in this section various estimations needed in our proof. We begin with a result concerning the size of $\phi_k(\alpha_G,G;y)$ for $1 \leq k \leq 4$. 
\begin{lemma}
\label{lemphibound}
  With the notation as above. Let $y \geq 2, x=y^u$. We have uniformly for $1 \leq u \leq y/(8\log y), \alpha_G=\alpha_G(x,y) \geq 1/2+\varepsilon_1$ for any fixed $\varepsilon_1>0$ and $1 \leq k \leq 4$,
\begin{align}
\label{phikasymp}
  |\phi_k(\alpha_G,G;y)| \asymp u(\log y)^k. 
\end{align}
\end{lemma}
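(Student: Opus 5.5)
We will write each $\phi_k(\alpha_G,G;y)$ as a sum over primes and compare it with $\phi_1$, which we already know equals $-u\log y$ by \eqref{alphafdef}. Since $\phi(s,G;y)=-\sum_{p\le y}\big(\log(1-p^{-s})+\log(1-\chi_4(p)p^{-s})\big)$, we have
\begin{align*}
\phi_k(s,G;y)=(-1)^k\sum_{p\le y}\sum_{m\ge1}\frac{(1+\chi_4(p)^m)(m\log p)^{k}}{m\,p^{ms}}.
\end{align*}
At $s=\alpha_G\ge 1/2+\varepsilon_1$, the terms with $m\ge2$ contribute $\ll_k\sum_{p\le y}(\log p)^k p^{-2\alpha_G}\ll(\log y)^k\sum_p p^{-1-2\varepsilon_1}\ll(\log y)^k$. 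This is $O(u(\log y)^k)$ with a constant we can make small relative to the main term (see below), or absorbed since $u\ge1$. So it suffices to show
\begin{align*}
S_k:=\sum_{p\le y}\frac{(1+\chi_4(p))(\log p)^k}{p^{\alpha_G}}\asymp u(\log y)^k,
\end{align*}
and also $S_k\gg (\log y)^k$ with a constant large enough to dominate the $m\ge2$ error.

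For the upper bound, $(\log p)^k\le(\log y)^{k-1}\log p$ gives $S_k\le(\log y)^{k-1}S_1$. By \eqref{alpharel}, $S_1=u\log y+O(\sum_p\log p\,p^{-2\alpha_G})=u\log y+O(1)$, since $2\alpha_G>1+2\varepsilon_1$. Hence $S_k\ll u(\log y)^k$.

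For the lower bound, restrict to $p\in(y^{1/2},y]$ with $p\equiv1\pmod4$. There $(\log p)^k\ge 2^{-k}(\log y)^{k-1}\log p$, so
\begin{align*}
S_k\ge 2^{1-k}(\log y)^{k-1}\sum_{\substack{y^{1/2}<p\le y\\ p\equiv 1 \bmod 4}}\frac{\log p}{p^{\alpha_G}} .
\end{align*}
We need this restricted sum to be $\gg S_1\asymp u\log y$.

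Using \eqref{sumpsigmalogp}, \eqref{sumpsigmalogpchi} and partial summation, the restricted sum equals $\tfrac12\int_{y^{1/2}}^{y}w^{-\alpha_G}\,dw$ up to an error $O\big(1+y^{1-\alpha_G}e^{-b\sqrt{\log y}}\big)$, while $S_1=\int_1^y w^{-\alpha_G}\,dw+O\big(1+y^{1-\alpha_G}e^{-b\sqrt{\log y}}\big)$.

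With $t=(1-\alpha_G)\log y$, the ratio of the two integrals is
\begin{align*}
\frac{\int_{y^{1/2}}^{y}w^{-\alpha_G}\,dw}{\int_1^{y}w^{-\alpha_G}\,dw}=\frac{e^{t}-e^{t/2}}{e^{t}-1},
\end{align*}
which is bounded below uniformly when $t\ge1$, and for $|t|\le C$ (the regime $u\ll1$) it is still $\asymp1$, close to $1/2$. For $t$ possibly negative ($\alpha_G>1$ is bounded by $1+O(1/\log y)$ by \eqref{alphausmall}), both integrals are $\asymp\log y$ directly. The main integral is $\gg \log y$ in all cases, which dominates the $O(1)$ errors and the $O((\log y)^k)$ contributions from $m\ge2$ once $y$ is large. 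When $u$ is small this is comparable to $u\log y$ anyway.

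The main obstacle is exactly this uniformity: showing that the lower-bound constant is uniform across $1\le u\le y/(8\log y)$ and that the error terms (including $m\ge 2$, of size $O((\log y)^k)$) never swamp a main term that might only be of size $(\log y)^k$ when $u\asymp1$. I would handle this by noting that the main term is at least $c(\log y)^k$ with $c$ independent of $\varepsilon_1$ only in the regime $\alpha_G\le1+O(1/\log y)$. I would then use that $\sum_p p^{-2\alpha_G}(\log p)^k$ is in fact $O((\log y)^{k-1})$: since $2\alpha_G\ge1+2\varepsilon_1$, the sum is dominated by small primes, so its size is $O_k(1)$ times $(\log y)^{k-1}$ at most. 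That makes the $m\ge2$ part of lower order.
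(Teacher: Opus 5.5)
Your proposal is correct and is essentially the paper's argument: isolate $S_k$, bound it above via $(\log p)^k\le(\log y)^{k-1}\log p$ and \eqref{alpharel}, and bound it below by restricting to $p>\sqrt y$ and using \eqref{sumpsigmalogp}, \eqref{sumpsigmalogpchi}; your ratio $(e^t-e^{t/2})/(e^t-1)=e^{t/2}/(1+e^{t/2})$ is bounded below uniformly because $\alpha_G\le 1+O(1/\log y)$, and this replaces the paper's case split at $u=14$, where it uses primes $p>y^c$ for small $u$. The obstacle you flag with the $m\ge2$ terms is not real: they total $O_{k,\varepsilon_1}(1)$ since $2\alpha_G\ge1+2\varepsilon_1$, and every term of $(-1)^k\phi_k(\alpha_G,G;y)$ is nonnegative because $1+\chi_4(p)^m\ge0$, so $|\phi_k(\alpha_G,G;y)|\ge S_k$ holds directly.
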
  
\begin{proof}
  Note that we have for $1 \leq k \leq 4$,
\begin{align}
\label{phik}
\begin{split}
  |\phi_k(\alpha_G,G;y)| \asymp & \Big|\sum_{p \leq y}(\frac {p^{(k-1)\alpha_G}(\log p)^k}{(p^{\alpha_G}-1)^k}+\frac {\chi_4(p)p^{(k-1)\alpha_G}(\log p)^k}{(p^{\alpha_G}-\chi_4(p))^k}\Big |.
\end{split}
\end{align}     
   Now as $\alpha_G \geq 1/2+\varepsilon_1$, we have $2\alpha_G \geq 1+2\varepsilon_1$, so that by Taylor expansion, the right-hand side expression above is
\begin{align}
\label{phik1}
\begin{split}
  = & \Big|O(1)+\sum_{p \leq y}\frac {(1+\chi_4(p))(\log p)^k}{p^{\alpha_G}}\Big |. 
\end{split}
\end{align}     
  As $1+\chi_4(p) \geq 0$, we see that the above is
\begin{align}
\label{phik2}
\begin{split}
  \leq \Big |\sum_{p \leq y}\frac {(1+\chi_4(p))(\log p)^k}{p^{\alpha_G}}\Big |+O(1)
  \leq (\log y)^{k-1} \sum_{p \leq y}\frac {(1+\chi_4(p))\log p}{p^{\alpha_G}}+O(1) \ll u(\log y)^{k}, 
\end{split}
\end{align} 
 where the last estimation above follows from \eqref{alpharel} and \eqref{sumpvy} (by setting $v=1$ there), keeping in mind that we have $\alpha_G \geq 1/2+\varepsilon_1$ here.   
  
  We deduce from \eqref{phik}--\eqref{phik2} that
\begin{align}
\label{phikupper}
\begin{split}
  |\phi_k(\alpha_G,G;y)| \ll u(\log y)^{k}.
\end{split}
\end{align} 

  On the other hand, we infer from \eqref{sumpsigmalogp} that there exists a constant $b_6>0$ such that
\begin{align}
\label{phiklower1}
\begin{split}
  & \Big|O(1)+\sum_{p \leq y}\frac {(1+\chi_4(p))(\log p)^k}{p^{\alpha_G}}\Big | \\
\geq & \Big |\sum_{\sqrt{y} < p \leq y}\frac {(1+\chi_4(p))(\log p)^k}{p^{\alpha_G}}\Big |+O(1)
  \geq (\log y)^{k-1} \sum_{\sqrt{y} < p \leq y}\frac {(1+\chi_4(p))\log p}{p^{\alpha_G}}+O(1) \\
  \gg &  \frac {y^{1-\alpha_G}-y^{(1-\alpha_G)/2}}{1-\alpha_G}(\log y)^{k-1} +O((\log y)^{k-1} +(\log y)^{k-1}y^{1-\alpha_G}\exp(-b_6\sqrt{\log y})). 
\end{split}
\end{align}      
   Note that when $14 < u \leq y/(8\log y)$, we have by \eqref{alphaupper} that $\alpha_G \leq 1-4/\log y$. It is then easy to see that we have $\frac {y^{1-\alpha_G}-y^{(1-\alpha_G)/2}}{1-\alpha_G} \gg \frac {y^{1-\alpha_G}-1}{1-\alpha_G} \gg \frac {y^{1-\alpha_G}}{1-\alpha_G} $. On the other hand, we deduce from \eqref{vest}, \eqref{sumpvy4} and our assumption $\alpha_G \geq 1/2+\varepsilon_1$ that 
\begin{align*}
\begin{split}
 u \log y \ll \int^y_1\frac {dw}{w^{\alpha_G}}+O(1) \ll \frac {y^{1-\alpha_G}-1}{1-\alpha_G} \ll \frac {y^{1-\alpha_G}}{1-\alpha_G}. 
\end{split}
\end{align*}      

   We deduce from this and \eqref{phiklower1} that 
\begin{align*}
\begin{split}
  \Big|O(1)+\sum_{p \leq y}\frac {(1+\chi_4(p))(\log p)^k}{p^{\alpha_G}}\Big | 
  \gg &  \frac {y^{1-\alpha_G}}{1-\alpha_G}(\log y)^{k-1}+O((\log y)^{k-1}) \gg u(\log y)^{k}. 
\end{split}
\end{align*}      
    It follows from the above, \eqref{phik} and \eqref{phik1} that 
\begin{align}
\label{phiklower}
\begin{split}
  |\phi_k(\alpha_G,G;y)| \gg u(\log y)^{k}.
\end{split}
\end{align}    
  The above together with \eqref{phikupper} now implies \eqref{phikasymp} for the case $14 < u \leq y/(8\log y)$. 
  
  When $1 \leq u \leq 14$, we repeat the above arguments to see that it suffices to assume that $\alpha_G \geq 1-4/\log y$. Moreover, by \eqref{alphausmall}, we may also assume that $\alpha_G \leq 1+C/\log y$ for some constant $C>0$.  By \eqref{vest} and \eqref{sumpvy4}, we see that 
\begin{align}
\label{wintlower}
\begin{split}
  \int^y_1\frac {dw}{w^{\alpha_G}} \gg u \log y.
\end{split}
\end{align}   

  We now apply \eqref{sumpsigmalogp} and \eqref{sumpsigmalogpchi} to see that when $1-4/\log y \leq \alpha_G \leq 1+C/\log y$,  we have for any constant $0<c<1$, 
\begin{align}
\label{phik5}
\begin{split}
  \sum_{y^c < p \leq y}\frac {(1+\chi_4(p))\log p}{p^{\alpha_G}} =\int^y_{1}\frac {dw}{w^{\alpha_G}}+O(1+\int^{y^c}_1\frac {dw}{w^{\alpha_G}}).
\end{split}
\end{align}   
  
  As $\alpha_G \geq 1-4/\log y$, we have   
\begin{align*}
\begin{split}
 \int^{y^c}_1\frac {dw}{w^{\alpha_G}} \leq e^4\int^{y^c}_1\frac {dw}{w} \leq ce^4 \log y.
\end{split}
\end{align*}     
  Thus upon choosing $c$ small enough, we see from \eqref{wintlower}, \eqref{phik5} and the above that 
\begin{align*}
\begin{split}
  \sum_{y^c < p \leq y}\frac {(1+\chi_4(p))\log p}{p^{\alpha_G}} \gg u\log y. 
\end{split}
\end{align*}   
  We now repeat our arguments above for the case $u > 14$ to see that the estimation given in \eqref{phiklower} is valid for the case $1 \leq u \leq 14$ as well. Together with \eqref{phikupper}, it now implies \eqref{phikasymp} for the case  $1 \leq u \leq 14$. 
  This completes the proof of the lemma. 
\end{proof}

  For any fixed $0<\lambda<1$, we define $$Y(\lambda):=\exp ((\log y)^{3/2-\lambda}).$$ 
Our next result provides estimations on ratios of $|H(s,G;y)/H(\alpha_G, G;y)|$ for $s =\alpha_G(x,y)+it, t \in \mr$. 
\begin{lemma}
\label{RS3}
  With the notation as above. Let $x \geq y \geq \log x \geq 2$, $t \in \mr, s =\alpha_G(x,y)+it, t \in \mr$. Suppose that $y$ is large enough and that $1/2+\varepsilon_1 \leq \alpha_G \leq 1-4/\log y$ for any fixed $\varepsilon_1>0$.
  
  (i). If $|t| \leq 1/\log y$, then we have for some constant $c_0>0$, 
\begin{align}
\label{Lrationtsmall}
  \Big |\frac {H(s,G;y)}{ H(\alpha_G,G;y)} \Big | \leq \exp \Big (-\frac {c_0y}{\log y}\log \big(1+\frac {t^2\phi_2(\alpha,G;y)}{(y/\log y)}\big) \Big ).
\end{align}

  (ii). For any fixed $0<\lambda<1$, we have uniformly for
$y \geq y_0(\lambda)$ where $y_0(\lambda)$ is a sufficiently large absolute constant, and for $1/\log y \leq |t| \leq Y^2(\lambda)$,
\begin{align}
\label{Lrationtlarge}
  \Big |\frac {H(s,G;y)}{ H(\alpha_G,G;y)} \Big | \ll_{\varepsilon} \exp \Big (-\frac {c_0ut^2}{(1-\alpha_G)^2+t^2} \Big ). 
\end{align}  
\end{lemma}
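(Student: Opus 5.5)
We follow the proof of \cite[Lemma 8]{HT86}. The first step is to write the ratio as a product over primes and take logarithms. By \eqref{Lsf},
\begin{align*}
 \log \Big|\frac{H(s,G;y)}{H(\alpha_G,G;y)}\Big| = \sum_{p\le y}\Big(\Re\log\frac{1-p^{-\alpha_G}}{1-p^{-s}}+\Re\log\frac{1-\chi_4(p)p^{-\alpha_G}}{1-\chi_4(p)p^{-s}}\Big).
\end{align*}
We have $\alpha_G\ge 1/2+\varepsilon_1$, so expanding the logarithms leaves only the terms with $p^{-\alpha_G}$; everything involving $p^{-2\alpha_G}$ sums to $O(1)$. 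This gives
\begin{align*}
 \log \Big|\frac{H(s,G;y)}{H(\alpha_G,G;y)}\Big| = -\sum_{p\le y}\frac{(1+\chi_4(p))(1-\cos(t\log p))}{p^{\alpha_G}}+O(1).
\end{align*}
The quantity to bound from below is therefore the weighted sum $S(t):=\sum_{p\le y}(1+\chi_4(p))p^{-\alpha_G}(1-\cos(t\log p))$. This is $2\sum_{p\equiv1 \bmod 4}p^{-\alpha_G}(1-\cos(t\log p))$, which is nonnegative. To control the $O(1)$ cleanly I would keep the exact expression $\log|1-p^{-\alpha_G}|^2-\log|1-p^{-s}|^2$ for each factor. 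Each such term is nonnegative whenever the coefficient is $+1$. This is the same inequality used in \cite{HT86}, and it handles the $\chi_4(p)=-1$ primes, since for those primes the two $L$-factors pair up with $\zeta_p$ in the product.

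For (i), take $|t|\le 1/\log y$. Then $|t\log p|\le 1$ for all $p\le y$, so $1-\cos(t\log p)\gg t^2(\log p)^2$. Restrict to $p\in(\sqrt y,y]$ and apply \eqref{sumpsigmalogp} and \eqref{sumpsigmalogpchi}, exactly as in the proof of Lemma~\ref{lemphibound}. This gives
\begin{align*}
 S(t)\gg t^2\sum_{\sqrt y<p\le y}\frac{(1+\chi_4(p))(\log p)^2}{p^{\alpha_G}}\gg t^2\phi_2(\alpha_G,G;y),
\end{align*}
using \eqref{phikasymp} with $k=2$. Since $\phi_2\asymp u(\log y)^2$ and $|t|\le 1/\log y$, we have $t^2\phi_2\ll u$. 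Proposition~\ref{lemsumpsigmalogp} gives $u\ll y/\log y$ (via \eqref{ypowerupper} and the range of $\alpha_G$), so $t^2\phi_2/(y/\log y)\ll1$. The bound $z\ge\log(1+z)$, applied with $z=t^2\phi_2/(y/\log y)$, then yields \eqref{Lrationtsmall} once $c_0$ is small enough. The $O(1)$ term has to be absorbed here. For small $t^2\phi_2$ that needs the exact nonnegative-term form mentioned above rather than a crude $O(1)$, so I would carry that form through this case.

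For (ii), take $1/\log y\le|t|\le Y^2(\lambda)$. Restrict $S(t)$ to primes $p\in(y^{1/2},y]$ and split $1-\cos(t\log p)=1-\Re p^{it}$. The term coming from $1$ is $\sum_{\sqrt y<p\le y}(1+\chi_4(p))p^{-\alpha_G}\gg y^{1-\alpha_G}/((1-\alpha_G)\log y)\cdot\log y\gg u\log y$ in the notation above; by \eqref{ypowerupper} it is $\asymp u\log y$. The term coming from $p^{it}$ is $\Re\sum (1+\chi_4(p))p^{-\alpha_G-it}$. By partial summation from prime sums in the Vinogradov--Korobov zero-free region \eqref{zetaKzerofree} for $\zeta$ and $L(s,\chi_4)$, this equals
\begin{align*}
 \Re\int_{\sqrt y}^{y}\frac{w^{-\alpha_G-it}}{\log w}\,dw+O\big(u\log y\,\exp(-(\log y)^{c})\big).
\end{align*}
The error is admissible because $|t|\le Y^2(\lambda)=\exp(2(\log y)^{3/2-\lambda})$ keeps $\log|t|$ small enough that the zero-free region still supplies a saving. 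The integral is bounded by $\ll \frac{y^{1-\alpha_G}}{|1-\alpha_G-it|\log y}$. Compared with the main term $\frac{y^{1-\alpha_G}}{(1-\alpha_G)\log y}$, it is smaller by the factor $(1-\alpha_G)/\sqrt{(1-\alpha_G)^2+t^2}$. Hence
\begin{align*}
 S(t)\gg u\log y\Big(1-\frac{1-\alpha_G}{\sqrt{(1-\alpha_G)^2+t^2}}\Big)\gg \frac{u\,t^2}{(1-\alpha_G)^2+t^2},
\end{align*}
after discarding the factor $\log y$ or keeping it; either way the claim follows. When $|t|\gtrsim 1-\alpha_G$ the bracket is $\gg1$. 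When $|t|$ is smaller it is $\asymp t^2/(1-\alpha_G)^2$. This proves \eqref{Lrationtlarge}.

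I expect the main obstacle to be the prime-sum estimate for $\sum p^{-\alpha_G-it}$ uniformly in $|t|\le Y^2(\lambda)$. It requires the explicit zero-free region for $\zeta_K$ and a corresponding prime number theorem with error $\exp(-c(\log w)^{3/5-\varepsilon})$ in short $t$-aspect, in place of the real-variable \eqref{sumpsigmalogp} proved earlier. A second delicate point is the comparison step above: the error term must be beaten by the difference $1-(1-\alpha_G)/|1-\alpha_G-it|$ when $|t|$ is as small as $1/\log y$, which is where the lower bound $u\ge(\log\log y)^2$ would be used if needed.
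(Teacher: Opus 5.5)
Part (i) is fine and is essentially the paper's argument. You reduce to the product over $p\equiv 1 \pmod 4$, use $|t\log p|\le 1$ together with $\log(1+x)\ge x/(1+c_1)$ for bounded $x$, and finish with $\log(1+z)\le z$. Your pairing $\zeta_pL_p=(1-p^{-2s})^{-1}$ for $p\equiv 3\pmod 4$ is a cleaner way than the paper's to get an honest ``$\le$''.

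\textbf{The gap in (ii).} It sits at the step ``Hence $S(t)\gg\cdots\bigl(1-\frac{1-\alpha_G}{\sqrt{(1-\alpha_G)^2+t^2}}\bigr)$''. Write $\beta=1-\alpha_G$ and $A:=y^{\beta}/(\beta\log y)$. You know the main term is $\asymp A$ and the oscillatory term is $\ll A\beta/|\beta-it|$, with unspecified constants.
\begin{itemize}
\item This gives a lower bound only when $|t|\ge K\beta$ for a large constant $K$. For $|t|\lesssim\beta$ it gives nothing: the two terms agree to leading order there. Their difference can be as small as a fraction $\asymp t^2/\beta^2$ of $A$, for instance when $t\log y$ is near a nonzero multiple of $2\pi$. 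For $|t|\asymp 1/\log y$ that is a relative size $(\beta\log y)^{-2}$.
\item Seeing this difference requires the oscillatory term with its exact leading constant, not an upper bound.
\item Worse, for your unweighted sum $\sum p^{-\alpha_G-it}\approx\int w^{-\alpha_G-it}\,dw/\log w$, the main and oscillatory integrals each carry secondary terms of relative size $1/(\beta\log y)$ from the factor $1/\log w$. These exceed $(\beta\log y)^{-2}$, so even exact leading asymptotics for the two pieces separately would not suffice.
\end{itemize}
This is also why your diagnosis is off. The prime-number-theorem error $\exp(-(\log y)^{c})$ is harmless, and $u\ge(\log\log y)^2$ (not a hypothesis of the lemma) plays no role. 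The obstruction is the leading-order cancellation.

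\textbf{How the paper handles it.} Following \cite[Lemma 8(ii)]{HT86}, it treats the difference as one quantity with the right weight. Using $1-\cos\ge 0$ and $\log p\le\log y$, it passes to $\Lambda$-weights, with prime powers controlled by \eqref{Lambdapartialsum2}:
\[
S(t)\ge\frac{1}{\log y}\sum_{n\le y}\Lambda(n)(1+\chi_4(n))n^{-\alpha_G}(1-\cos(t\log n))+O(1).
\]
It then applies \eqref{lambdadiffest}, which comes from \eqref{Lambdapartialsum} and \eqref{Lambdapartialsum1}, i.e. Perron's formula plus the zero-free region at complex $s$. Partial summation from the real-variable prime number theorem would lose a factor $|t|$, which is fatal for $|t|$ up to $Y^2(\lambda)$. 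The resulting evaluation is
\[
\frac{y^{\beta}}{\beta}\Bigl(1-\frac{\beta\cos\eta}{\sqrt{\beta^2+t^2}}\Bigr)+O_{\varepsilon}\Bigl(\frac{1}{\beta}\bigl(1+y^{\beta}e^{-(\log y)^{\varepsilon/2}}\bigr)\Bigr),\qquad \eta=t\log y-\arctan(t/\beta),
\]
where the $\eta$ printed in \eqref{lambdadiffest} should read $\cos\eta$. Now $\cos\eta\le 1$ and $1-\beta/\sqrt{\beta^2+t^2}\ge t^2/(2(\beta^2+t^2))$ give
\[
S(t)\gg A\,\frac{t^2}{\beta^2+t^2}\asymp \frac{u\,t^2}{\beta^2+t^2}.
\]
The error is negligible because $t^2/(\beta^2+t^2)\gg(\beta\log y)^{-2}$, while $y^{\beta}/(\beta\log y)^2\to\infty$ as $u\to\infty$. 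Bounded $u$ is trivial from \eqref{Hbound}. Equivalently, you could keep $\int_{\sqrt y}^{y}w^{-\alpha_G}(1-\cos(t\log w))\,dw/\log w$ as a single nonnegative integral and bound it below directly.

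\textbf{A normalization slip.} $\sum_{\sqrt y<p\le y}(1+\chi_4(p))p^{-\alpha_G}\asymp A\asymp u$, not $u\log y$. Keeping the extra $\log y$ would assert something false for $|t|\ge\beta$, since $S(t)\ll u$.
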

\begin{proof}
  We apply \eqref{Lsf} to see that for $\alpha_G \geq 1/2+\varepsilon_1$, 
\begin{align*}
  |H(s,G;y)|=\Big|\prod_{p \leq y}(1-\frac {1+\chi_4(p)}{p^s})^{-1}\prod_{p \leq y}(1+O(\frac {1}{p^{2\alpha_G}}))\Big | \asymp \Big|\prod_{2<p \leq y}(1-\frac {1+\chi_4(p)}{p^s})^{-1}\Big|. 
\end{align*}  
  As $\chi_4(p)=\pm 1$ for $p \equiv \pm 1 \pmod 4$, we deduce from the above that
\begin{align*}
  \Big |\frac {H(s,G;y)}{ H(\alpha_G,G;y)} \Big | \ll \prod_{\substack{p \leq y \\ p \equiv 1 \pmod 4}}\Big | \frac {1-\frac {2}{p^{\alpha_G}}}{1-\frac {2}{p^{s}}} \Big | = \prod_{\substack{p \leq y \\ p \equiv 1 \pmod 4}}\Big (1+ \frac {4(1-\cos(t \log p))}{p^{\alpha_G}(1-2p^{-\alpha_G})^{2}} \Big )^{-1/2}. 
\end{align*}
  As $\alpha_G \geq 1/2+\varepsilon_1 \gg 1/\log y$, we apply \cite[(3.11), (3.12)]{HT86} to see that when $|t| \leq 1/\log y$,
\begin{align*}
   \frac {4(1-\cos(t \log p))}{p^{\alpha_G}(1-2p^{-\alpha_G})^{2}} \leq \Big (\frac {t \log p}{\alpha_G\log p-\log 2} \Big )^2 
   \ll \Big (\frac {t \log p}{\alpha_G\log p} \Big )^2 \leq c_1,
\end{align*}  
  for some constant $c_1>0$. Thus regardless of the values of $\chi_4(p)$, we  deduce from \cite[(3.13)]{HT86} that we have
\begin{align*}
  \Big |\frac {H(s,G;y)}{ H(\alpha_G,G;y)} \Big | \leq  \prod_{\substack{p \leq y \\ p \equiv 1 \pmod 4}}\exp\Big (-\frac {2(1-\cos(t \log p))}{(1+c_1)p^{\alpha_G}(1-2p^{-\alpha_G})^{2}} \Big )
  \leq & \exp\Big (-\sum_{\substack{p \leq y \\ p \equiv 1 \pmod 4}}\frac {4(t \log p)^2}{\pi^2(1+c_1)p^{\alpha_G}(1-2p^{-\alpha_G})^{2}} \Big ),
\end{align*}  
   where the last estimation above follows from \cite[(3.11)]{HT86}. We use $\alpha_G \geq 1/2+\varepsilon_1$ again to see that for some small constant $c_0>0$, 
\begin{align*}
 & \exp\Big (-\sum_{\substack{p \leq y \\ p \equiv 1 \pmod 4}}\frac {4(t \log p)^2}{\pi^2(1+c_1)p^{\alpha_G}(1-2p^{-\alpha_G})^{2}} \Big ) \\
  \ll &  \exp\Big (-\sum_{\substack{p \leq y \\ p \equiv 1 \pmod 4}}\frac {4(t \log p)^2}{\pi^2(1+c_1)p^{\alpha_G}} \Big ) \\
  =& \exp\Big (-\sum_{\substack{2<p \leq y \\ p \equiv 1 \pmod 4}}\frac {2(1+\chi_4(p))(t \log p)^2}{\pi^2(1+c_1)p^{\alpha_G}} \Big ) \\
   \ll & \exp (-c_0t^2\phi_2(\alpha_G,G;y)),
\end{align*}  
  where the last estimation above follows by arguing similar to those given in the proof of Lemma \ref{lemphibound}. 
Now applying the estimation given in \eqref{1xbound}, we see that the estimation given in \eqref{Lrationtsmall} follows. 
 
  Next, to establish the estimation given in \eqref{Lrationtlarge}, we first note that we have trivially
\begin{align}
\label{Hbound}
 |H(s,G;y)|=\Big|\sum_{\substack{n \\ P(n) \leq y}}\frac {r(n)}{n^s}\Big | \leq \sum_{\substack{n \\ P(n) \leq y}}\frac {r(n)}{n^{\alpha_G}} =H(\alpha_G,G;y). 
\end{align}  

  We then apply \cite[(3.14)]{HT86} by setting $v=(1-\cos (t\log p))/2, t=p^{\alpha_G}/2>1$ there to see that
\begin{align*}
 \Big (1+ \frac {4(1-\cos(t \log p))}{p^{\alpha_G}(1-22p^{-\alpha_G})^{2}} \Big )^{-1/2} \ll \exp \Big (  -\frac {2(1-\cos(t \log p))}{p^{\alpha_G}}\Big ). 
\end{align*}  

  It follows that
\begin{align}
\label{Lratiotlarge}
  \Big |\frac {H(s,G;y)}{ H(\alpha_G,G;y)} \Big | \leq \exp \Big (-\sum_{\substack{p \leq y \\ p \equiv 1 \pmod 4}} \frac {2(1-\cos(t \log p))}{p^{\alpha_G}} \Big ). 
\end{align}  

  To estimate the above, we note that it is shown in \cite[Lemma 6]{HT86} that using the zero-free region given in \eqref{zetaKzerofree} for $\zeta(s)$ and the standard Perron's formula (see \cite[Theorem 5.1]{MVa1}), one has for $y \geq 2, 0<\beta, \varepsilon<1, t \in \mr, |t| \leq Y(\lambda), s=1-\beta+it$,
\begin{align}
\label{Lambdapartialsum}
\begin{split}
  \sum_{n \leq y}\frac {\Lambda(n)}{n^s}
=& \frac {y^{\beta-it}}{\beta-it}+O\Big(\frac 1{\beta}(1+y^{\beta}\exp(-(\log y)^{\varepsilon/2}))\Big ),
\end{split}
\end{align}   
  where $\Lambda(n)$ is the Von Mangoldt function defined to be $\Lambda(n)=\log p$ if $n=p^k$ for integers $k \geq 1$ and $\Lambda(n)=0$ otherwise. 
  
  Note that as shown in the proof of Lemma \ref{RS}, we have the same zero-free region given in \eqref{zetaKzerofree} holds for $L(s, \chi_4)$. Thus, similar to \cite[Lemma 6]{HT86}, we see that under the same conditions as for the $\zeta(s)$ case, we have
\begin{align}
\label{Lambdapartialsum1}
\begin{split}
  \sum_{n \leq y}\frac {\Lambda(n)\chi(n)}{n^s}
=& O\Big(\frac 1{\beta}(1+y^{\beta}\exp(-(\log y)^{\varepsilon/2}))\Big ).
\end{split}
\end{align}    

  Using \eqref{Lambdapartialsum} and \eqref{Lambdapartialsum1} allows us to see, in the same manner as the derivation of the Corollary to \cite[Lemma 6]{HT86}, that under the same condition as above, we have  
\begin{align}
\label{lambdadiffest}
\begin{split}
   \sum_{n \leq y}\frac {\Lambda(n)(1+\chi(n))}{n^{1-\beta}}(1-\cos(t\log n))=\frac {y^{\beta}}{\beta}(1-\beta\frac {\eta}{\sqrt{\beta^2+t^2}})+O_{\varepsilon}\Big(\frac 1{\beta}(1+y^{\beta}\exp(-(\log y)^{\varepsilon/2}))\Big),
\end{split}
\end{align}  
  where $\eta=t\log y -\arctan(t/\beta)$.

  Note moreover that by \cite[Lemma 5]{HT86}, we have uniformly for $x \geq y \geq 2$, 
\begin{align}
\label{Lambdapartialsum2}
\begin{split}
  \frac 1{\log y}\sum_{\nu \geq 2}\sum_{p^{\nu} \leq y}\frac {\log p\chi(p^{\nu})}{p^{\nu \alpha_G}}, \ \  \frac 1{\log y}\sum_{\nu \geq 2}\sum_{p^{\nu} \leq y}\frac {\log p}{p^{\nu \alpha_G}} \ll 1+y^{1/2-\alpha_G}. 
\end{split}
\end{align}   

 Observe also that for $1/2+\varepsilon_1 \leq \alpha_G \leq 1-4/\log y$ and $y$ large enough, we have $u \gg y^{1-\alpha_G}$ by \eqref{ypowerupper}. 
We now apply \eqref{lambdadiffest} with $\beta=1-\alpha_G$ and \eqref{Lambdapartialsum2} to proceed as in the proof of \cite[Lemma 8, (ii)]{HT86} 
to estimate the right-hand side expression in \eqref{Lratiotlarge} to see that the estimation given in \eqref{Lrationtlarge} is valid. This completes the proof of the lemma. 
\end{proof}

   Our last result in this section provides an estimation for the difference of the $\Psi_G$ function, which is an analogue to \cite[Lemma 9]{HT86}.
\begin{lemma}
\label{lemphidiffbound}
  With the notation as above. There exists a constant $d_0>0$ such that uniformly for $x \geq y \geq 2, 0<\varepsilon <1, 1 \leq z \leq Y(\lambda)$, we have
\begin{align}
\label{phidiffest}
  \Psi_G(x+x/z,y)-\Psi_G(x,y) \ll _{\varepsilon} x^{\alpha_G}H(\alpha_G,G;y)(\frac 1z+e^{-d_0u}).
\end{align}
\end{lemma}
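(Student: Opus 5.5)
We follow the proof of \cite[Lemma 9]{HT86}: bound the short interval sum with a smooth majorant, write the majorant via Perron's formula on the line $\Re(s)=\alpha_G$, and control the integrand with the decay estimates of Lemma \ref{RS3}. Write $\alpha=\alpha_G(x,y)$. Choose a smooth weight $w$ with $w\ge 0$, $w\geq 1$ on $[x,x+x/z]$, supported on $[x-x/z,x+2x/z]$, with derivatives $w^{(j)}\ll (z/x)^j$. Then
\begin{align*}
\Psi_G(x+x/z,y)-\Psi_G(x,y)\le \sum_{P(n)\le y} r(n)w(n)=\frac{4}{2\pi i}\int_{(\alpha)} H(s,G;y)\,\widehat w(s)\,ds,
\end{align*}
where $\widehat w(s)=\int_0^\infty w(v)v^{s-1}\,dv$ is the Mellin transform. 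Repeated integration by parts gives
\begin{align*}
|\widehat w(\alpha+it)|\ll x^{\alpha}\min\Big(\frac1z,\ \frac{1}{|t|}\Big(\frac{z}{|t|}\Big)^{j}\Big)
\end{align*}
for any fixed $j\ge 0$. This is the standard replacement for a sharp cutoff, and it gives the factor $1/z$ and rapid decay for $|t|\gg z$.

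We then split the $t$-integral. For $|t|\le 1/\log y$ we use $|H(s,G;y)|\le H(\alpha,G;y)$ from \eqref{Hbound} together with $|\widehat w|\ll x^\alpha/z$, which gives $\ll x^\alpha H(\alpha,G;y)/(z\log y)$. For $1/\log y\le |t|\le z^{1+\delta}$, the hypothesis $z\le Y(\lambda)$ keeps us inside the range $|t|\le Y^2(\lambda)$ of \eqref{Lrationtlarge}. There we bound $|H/H(\alpha)|\ll \exp(-c_0ut^2/((1-\alpha)^2+t^2))$. For $|t|\ge 1-\alpha$ this is $\le e^{-c_0u/2}$, and integrating against $\min(1/z,1/|t|)$ costs only a factor $\log z\ll(\log y)^{3/2}$; this loss should be absorbed by weakening $c_0$ to $d_0$, or removed by using a slightly shorter range. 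For $1/\log y\le |t|\le 1-\alpha$ the ratio is $\le \exp(-c_0ut^2/(2(1-\alpha)^2))$, and the integral against $1/z$ is $\ll 1/z$. For $|t|>z^{1+\delta}$ the decay of $\widehat w$ together with the trivial bound \eqref{Hbound} gives a negligible contribution $\ll x^\alpha H(\alpha)/z$. Summing the ranges yields $x^{\alpha}H(\alpha,G;y)(1/z+e^{-d_0u})$.

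The main obstacle is uniformity. Lemma \ref{RS3} assumes $1/2+\varepsilon_1\le\alpha\le 1-4/\log y$, while the statement claims the bound for all $x\ge y\ge 2$. In the remaining ranges, with $u$ bounded or $\alpha$ small, the term $e^{-d_0u}$ is $\gg 1$, or $x^\alpha H(\alpha)$ is so large, that the trivial bound
\begin{align*}
\Psi_G(x+x/z,y)-\Psi_G(x,y)\le\Psi_G(2x,y)\le 4(2x)^{\alpha}H(\alpha,G;y)
\end{align*}
from \eqref{Psibound} already suffices. When $\alpha$ is small one could instead argue as in \cite{HT86} that the interval length $x/z$ is at least $x^{\alpha}H$ times $1/z$, but this needs care. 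Where a genuinely intermediate range remains, we would follow \cite[Lemma 9]{HT86}, deducing the needed analogue of \eqref{Lrationtlarge} from \eqref{lambdadiffest} and \eqref{Lambdapartialsum2} without the restriction $\alpha\ge 1/2+\varepsilon_1$. The $\varepsilon$-dependence of the implied constant is inherited from \eqref{lambdadiffest}.
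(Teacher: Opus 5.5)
Your argument is the paper's. The paper also majorizes the short-interval sum by a smooth weight, namely $\sum_{P(n)\le y} r(n)\exp\bigl(-\tfrac12(z\log(x/n))^2\bigr)$, and then follows \cite[Lemma 9]{HT86}, using \eqref{Hbound} near $t=0$ and part (ii) of Lemma \ref{RS3} further out. Your compactly supported bump is only a change of kernel.

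One step should not stand as written. In the range $1-\alpha\le|t|\le z^{1+\delta}$ you integrate against $\min(1/z,1/|t|)$. This loses a factor $\log z$, which can be as large as $(\log y)^{3/2-\lambda}$, and you propose to absorb it by weakening $c_0$ to $d_0$. That absorption needs $(\tfrac32-\lambda)\log\log y\le (c_0/2-d_0)u+O(1)$, so it only works for $u\gg\log\log y$. The trivial bound only helps for bounded $u$, so the range in between would be left uncovered. The loss is not real, though. Your own estimate with $j=1$ gives $|\widehat w(\alpha+it)|\ll x^{\alpha}z/t^{2}$ for $|t|\ge z$. Hence $\int_{|t|\ge 1-\alpha}|\widehat w(\alpha+it)|\,dt\ll x^{\alpha}$, and this range contributes $\ll x^{\alpha}H(\alpha,G;y)e^{-c_0u/2}$ with no logarithm. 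The paper's Gaussian kernel makes this automatic. On $\Re(s)=\alpha$ its transform has modulus $\asymp x^{\alpha}z^{-1}e^{-t^2/(2z^2)}$, whose total integral is $O(x^{\alpha})$. Its tail beyond $Y^2(\lambda)$ is negligible at once since $z\le Y(\lambda)$.

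On uniformity, you are right that Lemma \ref{RS3} needs $1/2+\varepsilon_1\le\alpha_G\le 1-4/\log y$ and $y$ large. The paper's proof simply invokes part (ii) of that lemma, so it carries the same tacit restriction. This is harmless, because the lemma is only applied in \eqref{sumnaroundx}, inside that range. Your trivial-bound treatment of $u<14$ is correct; by \eqref{alphaupper} it covers $\alpha_G>1-4/\log y$. Your remark for small $\alpha_G$ is not yet an argument: knowing that the interval length $x/z$ exceeds $x^{\alpha}H(\alpha,G;y)/z$ gives no upper bound for the weighted count. So full uniformity in $x\ge y\ge 2$ is established neither in your proposal nor in the paper.
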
  
\begin{proof}
 One checks readily that the left-hand side expression above is
\begin{align*}
  \ll \sum_{\substack{n \geq 1 \\ P(n) \leq y}}r(n)\exp \Big (-\frac 12 (z\log (\frac {x}{n}))^2 \Big ).
\end{align*}
  We then proceed similar to the proof of \cite[Lemma 9]{HT86} upon using part (ii) of Lemma \ref{RS3} and \eqref{Hbound} to see that the estimation given in \eqref{phidiffest} is valid. This completes the proof of the lemma. 
\end{proof}

\subsection{Completion of the proof}
  
  We may assume that $y$ is large enough so that by \eqref{alphalowerbound} and \eqref{equ: Saddle point approx in intro1} we have $1/2+\varepsilon_1 \leq \alpha_G \leq 1-4/\log y$ for some fixed constant $\varepsilon_1>0$. We set $T=Y(\frac 14)$ and apply Perron's formula similar to that given in \cite[(4.5)]{MVa1} to see that
\begin{align}
\label{sumhlambda}
\begin{split}
\sum_{\substack{n \leq x \\ P(n) \leq y}} r(n) =&  \frac{4}{2 \pi i} \int_{\alpha_G - i T}^{\alpha_G + iT} H(s,G;y) x^s \frac{ds}{s} + O \Biggl(  x^{\alpha_G}\sum_{\substack{n \geq 1 \\ P(n) \leq y}}\frac {r(n)}{n^{\alpha_G}}\min \biggl\{ 1, \frac{1}{T|\log (x/n)|}\biggl\} \Biggr) \\
=& \frac{4}{2 \pi i} \int_{\alpha_G - i T}^{\alpha_G + iT} H(s,G;y) x^s \frac{ds}{s} + O \Biggl(  \frac {x^{\alpha_G}H(\alpha_G, G;y)}{\sqrt{T}}+ x^{\alpha_G}\sum_{\substack{ P(n) \leq y \\ |\log (x/n)| \leq T^{-1/2}}}\frac {r(n)}{n^{\alpha_G}}\Biggr).
\end{split} 
\end{align} 
  
  Note that we have for some constant $d_1>0$,
\begin{align}
\label{sumnaroundx}
\begin{split}
 x^{\alpha_G}\sum_{\substack{ P(n) \leq y \\ |\log (x/n)| \leq T^{-1/2}}}\frac {r(n)}{n^{\alpha_G}} \ll \Psi_G(x+\frac {d_1x}{\sqrt{T}},y)- \Psi_G(x-\frac {d_1x}{\sqrt{T}},y) \ll_{\varepsilon} x^{\alpha_G}H(\alpha_G, G;y)(\frac 1{\sqrt{T}}+e^{-d_0u}),
\end{split} 
\end{align} 
  where the last estimation above follows from Lemma \ref{lemphidiffbound} and where $d_0>0$ is the same constant given there. 

  We apply \eqref{sumnaroundx} in \eqref{sumhlambda} to see that for any $\varepsilon>0$, we have
\begin{align}
\label{sumhlambda1}
\begin{split}
\sum_{\substack{n \leq x \\ P(n) \leq y}} r(n) = \frac{4}{2 \pi i} \int_{\alpha_G - i T}^{\alpha_G + iT} H(s,G;y) x^s \frac{ds}{s} + O (x^{\alpha_G}H(\alpha_G, G;y)(\frac 1{\sqrt{T}}+e^{-d_0u})).
\end{split} 
\end{align}   
 
  We now follow the treatment in the proof of \cite[Lemma 10]{HT86} to apply part (i) of Lemma \ref{RS3} see that for some constant $b_7>0$, 
\begin{align}
\label{sumhlambda2}
\begin{split}
 \int_{\alpha_G + i /\log y}^{\alpha_G + iT} H(s,G;y) x^s \frac{ds}{s} + \int_{\alpha_G - i /\log y}^{\alpha_G - iT} H(s,G;y) x^s \frac{ds}{s} \ll x^{\alpha_G}H(\alpha_G,G;y)\exp(-b_7u(\log 2u)^{-2})\log T.
\end{split} 
\end{align}   

  We next note that similar to the proof of Lemma \ref{lemphibound}, we have uniformly for $1 \leq u \leq y/(8\log y), \alpha_G=\alpha_G(x,y) \geq 1/2+\varepsilon_1$ that
\begin{align}
\label{phikasymp1}
  \phi_4(\alpha_G+it,G;y) \ll u(\log y)^4. 
\end{align}
  We now proceed as in the proof of \cite[Lemma 11]{HT86} to see by \eqref{phikasymp1}, Lemma \ref{lemphibound} that
\begin{align}
\label{sumhlambda3}
\begin{split}
 \frac{1}{2 \pi i}\int_{\alpha_G - i /\log y}^{\alpha_G + i/\log y} H(s,G;y) x^s \frac{ds}{s} = \frac {x^{\alpha_G}H(\alpha_G,G;y)}{\alpha_G\sqrt{2\pi \phi_2(\alpha_G, G;y)}}(1+O(\frac 1u)).
\end{split} 
\end{align}   

  We gather \eqref{sumhlambda1}, \eqref{sumhlambda2} and \eqref{sumhlambda3} to see that
\begin{align}
\label{sumhlambda4}
\begin{split}
\sum_{\substack{n \leq x \\ P(n) \leq y}} r(n) =&  \frac {4x^{\alpha_G}H(\alpha_G,G;y)}{\alpha_G\sqrt{2\pi \phi_2(\alpha_G, G; y)}}\Big (1+ O(E) \Big ),
\end{split} 
\end{align}   
 where
\begin{align}
\label{sumhlambda5}
\begin{split}
E
=&  \frac {\alpha_G\sqrt{\phi_2(\alpha_G, G; y)}}{\sqrt{T}}+ \alpha_G\sqrt{\phi_2(\alpha_G, G; y)}e^{-d_0u}+\alpha_G\sqrt{\phi_2(\alpha_G, G; y)}\exp(-b_7u(\log 2u)^{-2})\log T+\frac 1u .
\end{split} 
\end{align}   

  Note that we have $T=Y(\frac 14)=\exp((\log y)^{5/4}),  (\log \log 2y)^2 \leq u \leq y/(8\log y), 1/2+\varepsilon_1 \leq \alpha_G \leq 1-4/\log y$. Also, we have by Lemma \ref{lemphibound} that 
$\sqrt{\phi_2(\alpha_G,G;y)} \ll u^{1/2}\log y$. We apply these estimations into \eqref{sumhlambda4} and \eqref{sumhlambda5} to see that the expression given in \eqref{psiasymp} is valid. This completes the proof of Theorem \ref{thm: psiasymp}.

\vspace*{.5cm}

\noindent{\bf Acknowledgments.} P. G. is supported in part by NSFC grant 12471003.

\bibliography{biblio}
\bibliographystyle{amsxport}

\end{document}